\theoremstyle{plain}
\newtheorem{theorem}{Theorem}[section]
\newtheorem{lemma}[theorem]{Lemma}
\newtheorem{corollary}[theorem]{Corollary}
\theoremstyle{remark}
\newtheorem{remark}[theorem]{Remark}
\newtheorem{example}[theorem]{Example}
\theoremstyle{definition}
\newtheorem{definition}[theorem]{Definition}
\newtheorem{miniremark}[theorem]{}
\DeclareMathOperator{\without}{\sim}
\newcommand{\restrict}{\mathop{\llcorner}}
\DeclareMathOperator{\trace}{trace}
\DeclareMathOperator{\Bdry}{Bdry}
\DeclareMathOperator{\with}{:}      
\DeclareMathOperator{\spt}{spt}     
\DeclareMathOperator{\im}{im}       
\DeclareMathOperator{\Lip}{Lip}     
\DeclareMathOperator{\dmn}{dmn}     
\DeclareMathOperator{\Hom}{Hom}     
\DeclareMathOperator*{\aplim}{\mathrm{ap}\, \lim}   
\newcommand{\ud}{\,\mathrm{d}}
\newcommand{\tint}[2]{{\textstyle\int_{#1}^{#2}}}
\title{An isoperimetric inequality for diffused surfaces}
\author{Ulrich Menne \and Christian Scharrer}
\date{\today}
\begin{document}
\maketitle 

\begin{abstract}
	For general varifolds in Euclidean space, we prove an isoperimetric
	inequality, adapt the basic theory of generalised weakly
	differentiable functions, and obtain several Sobolev type
	inequalities.  We thereby intend to facilitate the use of varifold
	theory in the study of diffused surfaces.
\end{abstract}

\paragraph{MSC-classes 2010.}  53A07 (Primary); 46E35, 49Q15 (Secondary).

\paragraph{Keywords.}  Varifold, isoperimetric inequality, generalised weakly
differentiable function, Sobolev inequality.

\section{Introduction}

\paragraph{General aim.} The isoperimetric inequality is well established in
the context of sharp surfaces (e.g., integral currents, sets, or integral
varifolds) in Euclidean space, but little appears to be known for diffused
surfaces (i.e., for surfaces that are not concentrated on a set of the their
own dimension).  General varifolds form a very flexible model for the latter
case; in fact, for equations of Allen-Cahn type, their utility was established
by Ilmanen, Padilla, and Tonegawa (see~\cite{MR1237490} and \cite{MR1611144})
and, for discrete and computational geometry, their unifying use has been
recently suggested by Buet, Leonardi, and Masnou
(see~\cite{arXiv:1609.03625v1}).  The present paper shall contribute to this
proposed development by adapting several core tools to the possibly
non-rectifiable case.  To outline these results, \emph{suppose $m$ and $n$ are
positive integers, $m \leq n$, $V$ is an $m$~dimensional varifold in~$\mathbf
R^n$, and, \emph{to avoid case distinctions,} also $m>1$}; see
Section~\ref{sec:notation} for the notation.

\paragraph{Isoperimetric inequality, see
Section~\ref{section:isoperimetricInequality}.} The best result up to now 
(see the second author~\cite[6.11]{scharrer:MSc}) did apply to
general varifolds, but controlled only their rectifiable parts: \emph{If $\| V
\| ( \mathbf R^n ) < \infty$, then
\begin{equation*}
	\| V \| \, \{ x \with \boldsymbol \Theta^m ( \| V \|, x ) \geq d \}
	\leq \Gamma d^{-1/m} \| V \| ( \mathbf R^n )^{1/m} \| \delta V \| (
	\mathbf R^n ) \quad \text{for $0 < d < \infty$},
\end{equation*}
where $\Gamma$ is a positive, finite number determined by $m$}.  Following the
first author (see \cite[2.2]{MR2537022}), it unified the approach of Allard
in~\cite[7.1]{MR0307015} and Michael and Simon in~\cite[2.1]{MR0344978}.
Clearly, if $0 < d < \infty$, and $\boldsymbol \Theta^m ( \| V \|, x) \geq d$
for $\| V \|$~almost all~$x$, the result implies
\begin{equation*}
	\| V \| ( \{ x \with \boldsymbol \Theta^m ( \| V \|, x ) \geq d
	\})^{1-1/m} \leq \Gamma d^{-1/m} \| \delta V \| ( \mathbf R^n ).
\end{equation*}
We notice that $\| \delta V \|$ encodes both, the total mass of the
variational boundary and the integral of the modulus of the generalised mean
curvature of the varifold, see Allard~\cite[4.3]{MR0307015};  in particular, a
more classical form results for varifolds with vanishing mean curvature (i.e.,
generalised minimal surfaces) and, by Allard~\cite[4.8\,(4)]{MR0307015}, the
isoperimetric inequality for integral currents with non-optimal constant is a
special case.  In~\ref{theorem:improvedIsopIneq} and~\ref{definition:gamma},
we establish that, \emph{if $\| V \| ( \mathbf R^n ) < \infty$, then
\begin{align*}
	& \| V \| ( A (d ) )^{1-1/m} \leq \boldsymbol \gamma (m) d^{-1/m} \|
	\delta V \| ( \mathbf R^n ) \quad \text{for $0 < d < \infty$}; \\
	& \qquad \text{where $A(d) = \{ x \with \text{$\| V \| \, \mathbf
	B(x,r) \geq d \boldsymbol \alpha (m) r^m$ for some $0 < r < \infty$}
	\}$}.
\end{align*}}
By homogeneity considerations, one may not replace $(A(d),d^{-1/m})$ by
$(\mathbf R^n, 1 )$.  The sets~$A(d)$, for suitable $d$, naturally describe
the region, where the behaviour of the diffused surface resembles the
behaviour of an $m$~dimensional sharp surface.

\paragraph{Generalised weakly differentiable functions, see
Section~\ref{sec:tv}.}  We extend the basic theory of generalised
weakly differentiable functions (see the first
author~\cite[\S\S\,8--9]{MR3528825} and \cite[4.1,\,2]{snulmenn.sobolev}) from
rectifiable varifolds to general varifolds.  This theory includes the study of
closedness properties (under convergence, composition, addition, and
multiplication) and a coarea formula in functional analytic form.  The main
differences lie in the possible non-existence of decompositions
(see~\ref{example:decomposition}) and the ineffectiveness of $( \| V\|,
m)$~approximate differentials (see~\ref{remark:lipschitzian}).  This
development allows us to state the Sobolev inequalities in their natural
framework, but goes beyond that purpose.

\paragraph{Sobolev inequalities, see Section~\ref{sec:sobolev}.}  
In view of~\ref{remark:generalise_sect_8}, \ref{remark:generalise_sect_9}, and
\cite[8.16, 9.2]{MR3528825}, a version of our Sobolev inequality
in~\ref{thm:sob_average} may be stated as follows, employing (see
\ref{definition:genWeakDiffFct}) the space of $Y$~valued generalised weakly
differentiable functions $\mathbf T(V,Y)$ and the derivative $V\,\mathbf Df$
associated to functions $f$ in that space: \emph{If $\| \delta V \|$ is a
Radon measure, $Y$ is a finite dimensional normed vector space, $f \in \mathbf
T( V, Y )$, $\| V \| \, \{ x \with |f(x)| >  0 \} < \infty$, $0 < r < \infty$,
and $g : \mathbf R^n \to \mathbf R$ satisfies
\begin{equation*}
	g(a) = \sup \big \{ y \with \| V \| ( \mathbf B (a,r) \cap \{ x \with
	|f(x)| \leq y \} ) \leq 2^{-1}\| V \| \, \mathbf B(a,r) \big \}
\end{equation*}
for $a \in \mathbf R^n$, then, for $0 < d < \infty$, there holds
\begin{equation*}
	{\textstyle \big ( \int_{B(d)}g^{m/(m-1)} \, \mathrm d \|V \|
	\big)^{1-1/m} \leq \Gamma d^{-1/m} \big ( \int |f| \, \mathrm d \|
	\delta V \| + \int \| V \, \mathbf Df \| \, \mathrm d \| V \| \big )},
\end{equation*}
where $B(d) = \{ x \with \| V \| \, \mathbf B (x,r) \geq d \boldsymbol \alpha
(m) r^m \}$, and $\Gamma = 2 \boldsymbol \beta (n) \boldsymbol \gamma (m)$.}
In this theorem, the number $r$ acts as a scale on which both the lower
density ratio bound and the averaging process by medians occur; in fact, the
width of a diffused surface could be a natural choice for such a scale.  More
generally, in~\ref{thm:sob_average}, we replace $r$ by a $\| V \|$~measurable
function.  Simple examples show that one may not replace $(g,B(d))$ by
$(f,A(d))$, see \ref{remark:optimality_sob}.  Finally, we note that the
special case $0 \leq f \in \mathscr D ( \mathbf R^n , \mathbf R )$ of the
preceding theorem could be derived replacing the use of \ref{remark:wy} and
the coarea formula for generalised weakly differentiable functions
(see~\ref{remark:generalise_sect_8} and \cite[8.5,\,30]{MR3528825}) by
Allard's more basic result~\cite[4.10]{MR0307015}.

\paragraph{Acknowledgements.}  We would like to thank Dr~Blanche Buet,
Professor Guido De~Philippis, and Professor Yoshihiro Tonegawa for
conversations on the subject of this paper. The paper was written while both
authors worked at the Max Planck Institute for Gravitational Physics (Albert
Einstein Institute) and the University of Potsdam.

\section{Notation} \label{sec:notation}

Generally, the notation of \cite[\S\,1]{MR3528825} is employed;  the only
exception is the usage of $\boldsymbol \gamma (m)$, see~\ref{definition:gamma}
and \ref{remark:gamma}.  In particular, our notation is largely consistent
with that of Fe\-de\-rer~\cite[pp.~669--671]{MR41:1976} and
Allard~\cite{MR0307015}.  While we do not duplicate each definition from
\cite[\S\,1]{MR3528825}, for the convenience of the reader, we recall some
less commonly used symbols and conventions below.

The difference of sets $A$ and $B$ is denoted by $A \without B$.  Whenever
$f$~is a linear map and $v$ belongs to its domain, the expression $\langle v,f
\rangle$ is synonymously used with~$f(v)$.  The inner product of $v$ and $w$,
by contrast, is denoted by $v \bullet w$.  The symbol~$P_\natural$ denotes the
symmetric linear homomorphism of $\mathbf R^n$ whose image is~$P$ and whose
restriction to~$P$ is the identity map of~$P$, whenever $P$ is a linear
subspace of~$\mathbf R^n$.  If $X$~is a locally compact Hausdorff space, then
$\mathscr K (X)$ denotes the vector space of continuous real valued functions
on $X$ with compact support.  Whenever $\phi$ measures $X$,  $Y$ is a
separable Banach space, $f$ is a $\phi$~measurable $Y$~valued function, and $1
\leq p \leq \infty$, the value of the Lebesgue seminorm~$\phi_{(p)}$ at $f$
satisfies
\begin{align*}
	\phi_{(p)}(f) & = {\textstyle \big ( \int|f|^p \, \mathrm d \phi
	\big )^{1/p} \quad \text{if $p < \infty$}}, \\
	\phi_{(p)}(f) & = \inf \big \{ s \with \text{$s \geq 0$, $\phi \, \{ x
	\with |f(x)|>s \} = 0$} \big \} \quad \text{if $p = \infty$}.
\end{align*}
Whenever $U$ is an open subset of a finite dimensional normed space, and
$Y$~is a separable Banach space, $\mathscr E (U,Y)$ denotes the space of all
functions from~$U$ into~$Y$, that are continuously differentiable of every
positive integer order, $\mathscr D(U,Y)$ denotes the subspace of those
functions in $\mathscr E(U,Y)$ with compact support, and $\mathscr D' ( U,
Y)$~is the space of distributions in~$U$ of type~$Y$.  Whenever $T \in
\mathscr D' ( U, Y)$, the symbol~$\| T \|$ denotes the largest Borel regular
measure over~$U$ such that
\begin{equation*}
	\| T \| ( A ) = \sup \{ T ( \theta ) \with \text{$\theta \in \mathscr
	D ( U,Y )$ with $\spt \theta \subset A$ and $| \theta (x) | \leq 1$
	for $x \in U$} \}
\end{equation*}
whenever $A$ is an open subset of~$U$.  In case such $T$ is representable by
integration (equivalently, if $\| T \|$ is a Radon measure), $T ( \theta )$
denotes the value of the unique $\| T \|_{(1)}$ continuous extension of $T$ to
$\mathbf L_1 ( \| T \|, Y )$ at $\theta \in \mathbf L_1 ( \| T \|, Y )$, and
$T \restrict A$ denotes the restriction of $T$ to $A$, whenever $A$ is $\| T
\|$ measurable (i.e., we have $(T \restrict A)( \theta ) = T(
\theta_A )$ whenever $\theta \in \mathscr{D}( U, Y )$, where $\theta_A( x ) =
\theta( x )$ for $x \in A$ and $\theta_A( x ) = 0$ for $x \in U \without A$).
Finally, if $V$ is an $m$~dimensional varifold in an open subset $U$ of
$\mathbf R^n$, $\| \delta V \|$ is a Radon measure, and $E$ is an $\| V \| +
\| \delta V \|$~measurable set, then the distributional boundary~$V \,
\partial E$ satisfies
\begin{equation*}
	V \, \partial E = ( \delta V ) \restrict E - \delta ( V \restrict E
	\times \mathbf G (n,m) ) \in \mathscr{D}' ( U, \mathbf R^n ).
\end{equation*}

\section{General isoperimetric inequality}
\label{section:isoperimetricInequality}

In this section, we prove a general isoperimetric inequality
in~\ref{theorem:improvedIsopIneq}.  It involves a~maximal-type function
corresponding to the density defined in~\ref{miniremark:defMaxFunction}.
Additionally, its proof relies on a simple iteration lemma
(see~\ref{lemma:iteration}) and a~variant of the ``calculus lemma'' used by
Simon (see~\ref{lemma:improvedIsopIneq} and~\ref{remark:calculus_lemma}).
Finally, in~\ref{theorem:poincareTypeIneq}
and~\ref{corollary:iso_ineq_with_size}, we state a~version of the
isoperimetric inequality in case the varifold is contained in a ball and a
version involving the size of the varifold.

\begin{miniremark} [Maximal-type function] \label{miniremark:defMaxFunction}
	Suppose $m$ and $n$ are positive integers, $m \leq n$, $V \in
	\mathbf{V}_m (\mathbf{R}^n)$, and the function $M : \mathbf{R}^n \to
	\overline{\mathbf{R}}$ satisfies
	\begin{gather*}
		M(x) = \sup \left \{ \frac{\|V\| \, \mathbf{B}(a,s)}{
		\boldsymbol{\alpha}(m)s^m} \with \text{$a \in \mathbf{R}^n$,
		$0 < s < \infty$, and $x \in \mathbf{B}(a,s)$} \right \}
	\end{gather*}
	for $x \in \mathbf{R}^n$.  Clearly, if $a \in \mathbf{R}^n$, $0 < s <
	\infty$, $0 < d < \infty$, and $\| V \| \, \mathbf{B} (a,s) \geq d
	\boldsymbol \alpha (m) s^m$, then $\mathbf{B} (a,s) \subset \{ x \with
	M(x) \geq d \}$.
\end{miniremark}

\begin{lemma} [Iteration lemma] \label{lemma:iteration}
	Suppose $0 \leq \kappa < \infty$, $0 < \lambda < 1$, $0 < \mu < 1$,
	the function $a : \{ d \with 0 < d < \infty \} \to \mathbf{R}$ is
	nonnegative, $\limsup_{d \to 0+} a(d) < \infty$, and
	\begin{equation*}
		a(d) \leq \kappa d^{-\mu} a(\lambda d)^\mu \quad \text{for
		$0 < d < \infty$}.
	\end{equation*}
	
	Then, $a(d)^{1-\mu} \leq \kappa d^{-\mu} (1/\lambda)^{\mu^2/(1-\mu)}$
	for $0 < d < \infty$.
\end{lemma}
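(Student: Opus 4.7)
The plan is to iterate the hypothesized inequality $k$ times, pass to the limit $k \to \infty$, and collect the arithmetic from three series: a geometric series in $\kappa$, a geometric series in $d^{-\mu}$, and an arithmetic-geometric series in $\log(1/\lambda)$.

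Concretely, applying the inequality to $\lambda d$ and substituting gives
\begin{equation*}
    a(d) \leq \kappa^{1+\mu} \lambda^{-\mu^2} d^{-(\mu+\mu^2)} a(\lambda^2 d)^{\mu^2},
\end{equation*}
and an induction on $k$ shows
\begin{equation*}
    a(d) \leq \kappa^{\sum_{j=0}^{k-1} \mu^j} \, \lambda^{-\sum_{j=1}^{k-1} j \mu^{j+1}} \, d^{-\sum_{j=1}^{k} \mu^j} \, a(\lambda^k d)^{\mu^k}
\end{equation*}
for every positive integer $k$. The three series in the exponents are all classical: as $k \to \infty$, the exponent of $\kappa$ tends to $1/(1-\mu)$, the exponent of $d^{-1}$ tends to $\mu/(1-\mu)$, and the exponent of $\lambda^{-1}$ tends to $\sum_{j=1}^{\infty} j \mu^{j+1} = \mu^2/(1-\mu)^2$ since $\sum j \mu^j = \mu/(1-\mu)^2$.

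For the final factor, I would use the hypothesis $\limsup_{d\to 0+} a(d) < \infty$: since $\lambda^k d \to 0$ as $k \to \infty$, there is a constant $C$ with $a(\lambda^k d) \leq C$ for $k$ sufficiently large, whence $a(\lambda^k d)^{\mu^k} \leq \max(1,C)^{\mu^k} \to 1$ because $\mu^k \to 0$. Combining,
\begin{equation*}
    a(d) \leq \kappa^{1/(1-\mu)} \, \lambda^{-\mu^2/(1-\mu)^2} \, d^{-\mu/(1-\mu)}.
\end{equation*}
Raising both sides to the power $1-\mu$ yields exactly the claim $a(d)^{1-\mu} \leq \kappa d^{-\mu} (1/\lambda)^{\mu^2/(1-\mu)}$.

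The only mildly delicate point I expect is the treatment of $a(\lambda^k d)^{\mu^k}$ when $a$ may vanish (in which case the factor is $0$ rather than tending to $1$) together with the handling of the case where the bounding constant $C$ from the \emph{limit superior} is less than $1$; both are handled by taking $\max(1,C)$ as above. The rest is routine series summation and can be written out cleanly by induction without further ideas.
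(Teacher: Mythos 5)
Your proof is correct and is essentially the same argument as the paper's: the paper states the iterated inequality in logarithmic form (hence first reducing to $\kappa>0$), writing that $\log a(d)$ does not exceed $(\log\kappa+\mu\log(1/d))\sum_{i=0}^{j-1}\mu^i+\log(1/\lambda)\sum_{i=1}^{j-1}i\mu^{i+1}+\mu^j\log a(\lambda^j d)$, which is exactly your multiplicative bound, and then passes to $j\to\infty$ using exactly the two facts you invoke (the three standard series sums, and $\mu^j\log a(\lambda^j d)\to 0$ via the $\limsup$ hypothesis). Your version merely stays multiplicative and so avoids the preliminary case split on $\kappa=0$; otherwise the decomposition, the induction, and the limiting step coincide with the paper's.
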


\begin{proof}
	Assume $\kappa > 0$.  Then, induction yields that $\log a(d)$ does not
	exceed
	\begin{equation*}
		 {\textstyle \big ( \log ( \kappa ) + \mu \log (1/d) \big )
		 \left ( \sum_{i=0}^{j-1} \mu^i \right ) + \log (1/ \lambda )
		 \left ( \sum_{i=1}^{j-1} i \mu^{i+1} \right ) + \mu^j \log a
		 ( \lambda^j d )}
	\end{equation*}
	whenever $0 < d < \infty$ and $j$ is a positive integer; here
	$\sum_{i=1}^0 i \mu^{i+1} = 0$.
\end{proof}

\begin{lemma} [Calculus lemma] \label{lemma:improvedIsopIneq}
	Suppose $0 < s < \infty$, $f : \{ t \with s \leq t < \infty \} \to
	\mathbf{R}$ is a nonnegative, nondecreasing function, $0 < m <
	\infty$, $s^{-m} f(s) \geq 3/4$,
	\begin{equation*}
		r = \sup \{ t \with \text{$s \leq t < \infty$ and $t^{-m} f(t)
		\geq 1/3$} \} < \infty,
	\end{equation*}
	$g : \{ t \with s \leq t \leq r \} \to \mathbf{R}$ is a nonnegative,
	$\mathscr{L}^1 \restrict \{ t \with s \leq t \leq r \}$ measurable
	function, and $t^{-m} f(t) \leq r^{-m} f(r) + \textstyle \int_t^r
	u^{-m} g(u) \,\mathrm d \mathscr{L}^1 \, u$ whenever $s \leq t \leq
	r$.

	Then, there exists $t$ satisfying
	\begin{equation*}
		s \leq t \leq r \quad \text{and} \quad f (5t) \leq 5^m r g(t).
	\end{equation*}
\end{lemma}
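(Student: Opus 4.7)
My plan is a contradiction argument performed throughout on the normalised function $F(t) := t^{-m}f(t)$. The two endpoint bounds $F(s) \geq 3/4$ and $F(r) \leq 1/3$ are the key pieces of information I extract. The first is exactly the hypothesis. For the second I argue as follows: by the definition of $r$ as a supremum, $F(t) < 1/3$ for every $t > r$; since $f$ is nondecreasing, $f(t) \geq f(r)$ for such $t$, whence $\liminf_{t \to r^{+}} F(t) \geq r^{-m} f(r) = F(r)$, so $F(r) \leq 1/3$ even if $f$ is discontinuous at $r$.

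Next, I introduce $G := \int_{s}^{r} u^{-m} g(u) \, \mathrm{d} u$. The assumed differential-type inequality at $t = s$ yields $G \geq F(s) - F(r) \geq 3/4 - 1/3 = 5/12$; applied at a general $t \in [s,r]$ it yields the uniform bound $F(t) \leq 1/3 + G$. Assume now, for contradiction, that $f(5t) > 5^{m} r \, g(t)$ for every $t \in [s, r]$. Writing $f(5u) = (5u)^{m} F(5u)$, the strict inequality rearranges to $u^{-m} g(u) < F(5u)/r$; integration over $[s, r]$ followed by the substitution $v = 5u$ produces
\begin{equation*}
	G < \frac{1}{r} \int_{s}^{r} F(5u) \, \mathrm{d} u = \frac{1}{5 r} \int_{5s}^{5r} F(v) \, \mathrm{d} v.
\end{equation*}

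Finally, I would bound the rightmost integral by splitting at $\max\{5s, r\}$: on $[\max\{5s, r\}, 5r]$ use the first step to obtain $F \leq 1/3$, and on $[5s, r]$ (possibly empty) use the uniform bound $F \leq 1/3 + G$ from the second step. When $5s \leq r$ this produces $G < 1/3 + G/5$, forcing $G < 5/12$; when $5s > r$ it produces $G < (r-s)/(3r) < 1/3$; either way, the previously obtained lower bound $G \geq 5/12$ is contradicted. The chief delicate point is the derivation of $F(r) \leq 1/3$ when $f$ may jump at $r$, handled by the liminf argument above; measurability of $u \mapsto F(5u)$ on $[s,r]$ is automatic since the monotone function $f$ is Borel, so the integration step is rigorous.
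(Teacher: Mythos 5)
Your argument follows the same contradiction--integration--change-of-variables scheme as the paper's proof, with your $G = \int_s^r u^{-m}g\,\mathrm d\mathscr L^1\,u$ taking the role the paper assigns to $\kappa = \sup\{t^{-m}f(t) \with s \leq t \leq r\}$. The split of $[5s,5r]$ at $\max\{5s,r\}$ is exactly the paper's device (they simply merge the two cases into one chain of inequalities), and your derivation of $F(r) \leq 1/3$ via the right limit correctly handles a genuine subtlety.

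There is, however, one real gap: the finiteness of $G$. Your steps require $G < \infty$, but the hypotheses do not grant this. Nothing forbids $g$ from being so singular near $s$ that $G = \infty$ while $\int_t^r u^{-m}g\,\mathrm d\mathscr L^1\,u < \infty$ for every $t > s$; the assumed differential inequality then remains satisfiable (trivially so at $t=s$). In that event the uniform bound $F \leq 1/3 + G$ is vacuous, the strict inequality $G < \frac{1}{5r}\int_{5s}^{5r}F\,\mathrm d\mathscr L^1$ fails (both sides are $\infty$), and the rearrangement of $G < 1/3 + G/5$ into $G < 5/12$ is not legitimate. The paper never meets this difficulty because $\kappa$ is automatically finite: $f$ is nondecreasing on $[s,r]$, so $\kappa \leq s^{-m}f(r) < \infty$. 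To repair your proof, note $\kappa < \infty$ at the outset, deduce $\frac{1}{5r}\int_{5s}^{5r}F\,\mathrm d\mathscr L^1 \leq \kappa/5 + 4/15 < \infty$ by your own split of the interval, conclude $G < \infty$, and then your estimates run exactly as written.
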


\begin{proof}
	Abbreviating $\kappa = \sup \{ t^{-m} f(t) \with s \leq t \leq r \}$,
	we note that
	\begin{gather*}
		s \leq r, \quad 3/4 \leq \kappa < \infty, \quad \sup \{ t^{-m}
		f(t) \with r \leq t < \infty \} \leq 1/3, \\
		\tint sr t^{-m} f(t) \ud \mathscr{L}^1 \, t \leq
		\kappa r, \quad \tint r{5r} t^{-m} f(t) \ud \mathscr{L}^1 \, t
		\leq {\textstyle \frac {4r}3}.
	\end{gather*}
	Therefore, if the conclusion were false, we could estimate
	\begin{gather*}
		\tint sr t^{-m} g (t) \ud \mathscr{L}^1 \, t < r^{-1} \tint
		sr (5t)^{-m} f(5t) \ud \mathscr{L}^1 \, t = {\textstyle
		\frac{1}{5r}} \tint{5s}{5r} t^{-m} f(t) \ud \mathscr{L}^1 \, t
		\leq {\textstyle \frac \kappa 5 + \frac {4}{15}}, \\
		\kappa \leq r^{-m} f(r) + \tint sr t^{-m} g(t) \ud
		\mathscr{L}^1 \, t < {\textstyle \frac 35 + \frac \kappa 5},
	\end{gather*}
	whence, as $3/4 \leq \kappa < \infty$, it would follow $3/5 \leq
	4\kappa/5 < 3/5$, a contradiction.
\end{proof}

\begin{remark} \label{remark:calculus_lemma}
	The previous lemma and its proof are adapted from
	\cite[18.7]{MR756417}.
\end{remark}

\begin{theorem} [General isoperimetric inequality]
	\label{theorem:improvedIsopIneq}

	Suppose $m$, $n$, $V$, and $M$ are as in
	\ref{miniremark:defMaxFunction}, and $\| V \| ( \mathbf{R}^n ) <
	\infty$.  Then,
	\begin{equation*}
		\| V \| ( \{ x \with M(x) \geq d \} )^{1-1/m} \leq \Gamma
		d^{-1/m} \| \delta V \| ( \mathbf{R}^n ) \quad \text{for $0 <
		d < \infty$},
	\end{equation*}
	where $\Gamma = 2^{-1}$ if $m=1$, $\Gamma = 5^m 3^{1/(m-1)}
	\boldsymbol \alpha (m)^{-1/m}$ if $m > 1$, and $0^0=0$.
\end{theorem}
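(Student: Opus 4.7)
My strategy is to follow the Allard--Michael--Simon programme adapted to the varifold setting by combining the two lemmas of this section with a Vitali-type covering. I may assume $\|\delta V\|(\mathbf{R}^n) < \infty$, since the estimate is otherwise trivial; the case $m = 1$ (where the iteration lemma degenerates at $\mu = 1$) would be handled separately by a direct one-dimensional argument yielding the sharp $\Gamma = 1/2$. Henceforth fix $m > 1$ and $0 < d < \infty$, and abbreviate $a(d) = \|V\| \{ x \with M(x) \geq d \}$.

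\textbf{Local step.} For each $x \in \{M \geq d\}$, \ref{miniremark:defMaxFunction} supplies $b \in \mathbf{R}^n$ and $\sigma > 0$ with $x \in \mathbf{B}(b, \sigma)$ and $\|V\| \mathbf{B}(b, \sigma) \geq d \boldsymbol{\alpha}(m) \sigma^m$. Testing the first variation of $V$ against a smoothly truncated radial vector field centred at $b$ (as in Allard's derivation of monotonicity) yields an inequality of the form
\begin{equation*}
  t^{-m} f(t) \leq \rho^{-m} f(\rho) + \tint{t}{\rho} u^{-m} g(u) \ud \mathscr{L}^1 u \quad \text{for $\sigma \leq t \leq \rho$,}
\end{equation*}
where $f$ is a suitable rescaling of $\|V\| \mathbf{B}(b, \cdot)/\boldsymbol{\alpha}(m)$ (normalisation chosen so that $\sigma^{-m} f(\sigma) \geq 3/4$) and $g$ is extracted from the radial distribution of $\|\delta V\|\mathbf{B}(b, \cdot)$. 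Applying \ref{lemma:improvedIsopIneq} produces a scale $t(b) \in [\sigma, r(b)]$ with $f(5 t(b)) \leq 5^m r(b) g(t(b))$, which on unwinding delivers a local estimate
\begin{equation*}
  \|V\| \mathbf{B}(b, 5 t(b)) \leq C_1 \, r(b) \, \|\delta V\| \mathbf{B}(b, r(b)),
\end{equation*}
while the defining property $r(b)^{-m} f(r(b)) \geq 1/3$ combined with the normalisation forces the inclusion $\mathbf{B}(b, r(b)) \subset \{M \geq \lambda d\}$ for an explicit $\lambda \in (0, 1)$.

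\textbf{Covering, Hölder, iteration.} Since $\{ \mathbf{B}(b, 5 t(b)) \}$ covers $\{M \geq d\}$, Vitali's theorem extracts a countable disjoint subfamily $\{\mathbf{B}(b_i, t_i)\}$ whose five-fold enlargements still cover that set up to a $\|V\|$-null set. Summing the local estimate and applying Hölder with conjugate exponents $m$ and $m/(m-1)$ yields
\begin{equation*}
  a(d) \leq C_1 \Bigl( {\textstyle \sum_i} r_i^m \Bigr)^{1/m} \Bigl( {\textstyle \sum_i} \bigl( \|\delta V\|\mathbf{B}(b_i, r_i) \bigr)^{m/(m-1)} \Bigr)^{1-1/m}.
\end{equation*}
The elementary bound $\sum x_i^p \leq (\sum x_i)^p$ for $p \geq 1$ and $x_i \geq 0$, together with a Besicovitch bounded-overlap extraction applied to $\{\mathbf{B}(b_i, r_i)\}$, controls the second factor by a constant multiple of $\|\delta V\|(\mathbf{R}^n)$; meanwhile the inequality $r_i^m \leq C_2 \|V\|\mathbf{B}(b_i, r_i)/(d \boldsymbol{\alpha}(m))$ (immediate from $r_i^{-m} f(r_i) \geq 1/3$), combined with $\mathbf{B}(b_i, r_i) \subset \{M \geq \lambda d\}$ and the same bounded-overlap argument, controls the first factor by a constant multiple of $(d \boldsymbol{\alpha}(m))^{-1/m} a(\lambda d)^{1/m}$. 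Assembling the pieces produces a recursive estimate
\begin{equation*}
  a(d) \leq \kappa d^{-1/m} a(\lambda d)^{1/m}, \quad \kappa \text{ proportional to } \boldsymbol{\alpha}(m)^{-1/m} \|\delta V\|(\mathbf{R}^n);
\end{equation*}
since $a$ is bounded by $\|V\|(\mathbf{R}^n) < \infty$, \ref{lemma:iteration} with $\mu = 1/m$ delivers the theorem, and the factor $3^{1/(m-1)}$ in $\Gamma$ emerges from $(1/\lambda)^{\mu^2/(1-\mu)}$ on choosing $\lambda$ compatibly (namely $\lambda = 3^{-m}$).

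\textbf{Main obstacle.} The delicate point is the summation, in which two covering scales must be juggled in parallel: disjointness of $\{\mathbf{B}(b_i, t_i)\}$ underpins the bound on the $\|\delta V\|$-sum, while bounded overlap of the \emph{larger} family $\{\mathbf{B}(b_i, r_i)\}$ underpins the bound on the $\|V\|$-sum. Tuning the normalisation inside the calculus lemma so that the resulting $\kappa$ and $\lambda$ combine via \ref{lemma:iteration} to produce exactly $\Gamma = 5^m 3^{1/(m-1)} \boldsymbol{\alpha}(m)^{-1/m}$ is the constant-bookkeeping part that demands care.
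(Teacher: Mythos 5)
Your overall programme is the right one and closely parallels the paper: apply the calculus lemma~\ref{lemma:improvedIsopIneq} to the rescaled mass ratio, cover by Vitali, and close via the iteration lemma~\ref{lemma:iteration}. The case $m=1$ is indeed handled separately (the paper cites \cite[4.8\,(1)]{MR3528825}).

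However, there is a genuine gap in your covering step, and you have in fact identified it yourself as the ``main obstacle'' without resolving it. You treat the quantity $r$ appearing in the conclusion $f(5t)\leq 5^m r\,g(t)$ of the calculus lemma as a \emph{local} quantity $r(b)$, varying with the ball centre. This forces you into a H\"older split of $\sum_i r_i\,\|\delta V\|\mathbf B(b_i,t_i)$ and then into bounding $\sum_i r_i^m$ via $\sum_i\|V\|\mathbf B(b_i,r_i)$, which requires a bounded-overlap property of the \emph{enlarged} family $\{\mathbf B(b_i,r_i)\}$. But the Vitali selection only makes the small balls $\{\mathbf B(b_i,t_i)\}$ disjoint; since $t_i\leq r_i$ with no uniform comparability, the large balls $\{\mathbf B(b_i,r_i)\}$ may have unbounded overlap. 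Any further Besicovitch-type disjointification of the large balls would either sacrifice the covering of $\{M\geq d\}$ by the small five-fold enlargements or drag a dimension-dependent constant $\boldsymbol\beta(n)$ into $\Gamma$, which does not appear in the stated result. This is not just constant bookkeeping: as written, the argument does not close.

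The paper's resolution is that $r$ need not be local at all. One may take
\begin{equation*}
	r = \sup \{ s \with \text{$a\in\mathbf R^n$, $0<s<\infty$, and $\|V\|\mathbf B(a,s)\geq (d/3)\boldsymbol\alpha(m)s^m$} \},
\end{equation*}
a single global number that dominates every local $r(b)$ produced by the calculus lemma (because each such $r(b)$ is realising the same threshold condition $t^{-m}f(t)\geq 1/3$ at the particular centre $b$). With the global $r$ in hand, applying the calculus lemma with $f(t)=d^{-1}\boldsymbol\alpha(m)^{-1}\|V\|\mathbf B(a,t)$, $g(t)=d^{-1}\boldsymbol\alpha(m)^{-1}\|\delta V\|\mathbf B(a,t)$ yields, for each $x$ with $M(x)\geq d$, a ball $\mathbf B(a,t)\ni x$ with $\|V\|\mathbf B(a,5t)\leq 5^m r\,\|\delta V\|\mathbf B(a,t)$. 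The global $r$ then factors straight out of the Vitali sum:
\begin{equation*}
	\|V\|\{x\with M(x)\geq d\}\leq\sum_i\|V\|\mathbf B(a_i,5t_i)\leq 5^m r\sum_i\|\delta V\|\mathbf B(a_i,t_i)\leq 5^m r\,\|\delta V\|(\mathbf R^n),
\end{equation*}
with no H\"older, no bounded-overlap argument, and no dimensional constant. The observation from~\ref{miniremark:defMaxFunction} that each near-extremal ball lies in $\{M\geq d/3\}$ gives $r\leq 3^{1/m}d^{-1/m}\boldsymbol\alpha(m)^{-1/m}\|V\|(\{M\geq d/3\})^{1/m}$, producing the recursion $a(d)\leq\kappa d^{-1/m}a(d/3)^{1/m}$ with $\lambda=1/3$ (not $3^{-m}$) and $\kappa=5^m3^{1/m}\boldsymbol\alpha(m)^{-1/m}\|\delta V\|(\mathbf R^n)$; the iteration lemma then returns exactly $\Gamma=5^m3^{1/(m-1)}\boldsymbol\alpha(m)^{-1/m}$. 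Your proposal would need to be restructured around this global choice of $r$ to be correct.
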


\begin{proof}
	Assume $\| \delta V\| ( \mathbf{R}^n ) < \infty$.  In view of
	\cite[4.8\,(1)]{MR3528825}, we may assume that $m>1$.  We abbreviate
	$\kappa = 5^m 3^{1/m} \boldsymbol \alpha (m)^{-1/m} \| \delta V \| (
	\mathbf{R}^n )$.  By \ref{lemma:iteration} applied with $\lambda$,
	$\mu$, and $a(d)$ replaced by $1/3$, $1/m$, and $\| V \| \, \{ x \with
	M(x) \geq d \}$, respectively, it is sufficient to prove
	\begin{equation*}
		\| V \| \, \{ x \with M(x) \geq d \} \leq \kappa d^{-1/m} \| V
		\| ( \{ x \with M(x) \geq d/3 \} )^{1/m} \quad \text{for $0 <
		d < \infty$}.
	\end{equation*}
	For this purpose we define
	\begin{equation*}
		r = \sup \{ s \with \text{$a \in \mathbf{R}^n$, $0 < s <
		\infty$, and $\| V \| \, \mathbf{B} (a,s) \geq (d/3)
		\boldsymbol \alpha (m) s^m$} \}
	\end{equation*}
	and note that $r \leq 3^{1/m} d^{-1/m} \boldsymbol \alpha (m)^{-1/m}
	\| V \| ( \{ x \with M(x) \geq d/3 \} )^{1/m} < \infty$ by
	\ref{lemma:iteration}.  Moreover, whenever $x \in \mathbf{R}^n$ and
	$M(x) \geq d$, there exist $a \in \mathbf{R}^n$ and $0 < t \leq r$
	satisfying
	\begin{equation*}
		x \in \mathbf{B} (a,t), \quad \| V \| \, \mathbf{B} (a,5t)
		\leq 5^m r \| \delta V \| \, \mathbf{B} (a,t);
	\end{equation*}
	in fact, taking $a \in \mathbf{R}^n$ and $0 < s < \infty$ with $x \in
	\mathbf{B} (a,s)$ and $\| V \| \, \mathbf{B} (a,s) \geq (3d/4)
	\boldsymbol \alpha (m) s^m$, in view of \cite[4.5,\,6]{MR3528825}, one
	may apply \ref{lemma:improvedIsopIneq} with $f(t)$ and $g(t)$ replaced
	by $d^{-1} \boldsymbol \alpha (m)^{-1} \| V \| \, \mathbf{B} (a,t)$ and
	$d^{-1} \boldsymbol \alpha (m)^{-1} \| \delta V \| \, \mathbf{B}
	(a,t)$, respectively.  Finally, Vitali's covering theorem (see
	\cite[2.8.5,\,8]{MR41:1976}) yields the conclusion.
\end{proof}

\begin{remark}
	Apart of a possibly unnecessarily large number $\Gamma$, the preceding
	isoperimetric inequality comprises, firstly, that of Allard in
	\cite[7.1]{MR0307015}, secondly, that of the first author in
	\cite[2.2]{MR2537022}, and, thirdly, those of the second author in
	\cite[6.5,\,11]{scharrer:MSc}.  The last two items as well as the
	present inequality employ the strategy introduced by Michael and Simon
	in \cite[2.1]{MR0344978} (see also Simon \cite[18.6]{MR756417}) in the
	context of Sobolev inequalities.
\end{remark}

\begin{definition} [Best isoperimetric constant] \label{definition:gamma}
	Whenever $m$ is a positive integer, we denote by $\boldsymbol \gamma
	(m)$ the smallest nonnegative real number with the following property:
	if $n$, $V$, and $M$ are related to $m$ as in
	\ref{miniremark:defMaxFunction}, and $\| V \| ( \mathbf{R}^n ) <
	\infty$, then
	\begin{equation*}
		\| V \| ( \{ x \with M(x) \geq d \} )^{1-1/m} \leq \boldsymbol
		\gamma (m) d^{-1/m} \| \delta V \| ( \mathbf{R}^n ) \quad
		\text{for $0 < d < \infty$};
	\end{equation*}
	here $0^0=0$.
\end{definition}

\begin{remark}
	Considering a unit disc, we notice $\boldsymbol \alpha
	(m)^{-1/m}/m \leq \boldsymbol \gamma(m)$; in particular,
	$\boldsymbol{\gamma}(1) = 2^{-1}$ by \ref{theorem:improvedIsopIneq}.
	Also, if $m>1$, then $\boldsymbol \gamma (m) \leq 5^m 3^{1/(m-1)}
	\boldsymbol \alpha (m)^{-1/m}$ by \ref{theorem:improvedIsopIneq}, but
	the precise value of $\boldsymbol \gamma(m)$ is unknown.
\end{remark}

\begin{remark} \label{remark:gamma}
	Notice that $\boldsymbol \gamma (m)$ is greater or equal to the number
	bearing that name in \cite[\S\,1]{MR3528825}; if $m>1$, it is unknown
	whether these numbers agree.
\end{remark}

\begin{corollary} [General isoperimetric inequality in a ball]
	\label{theorem:poincareTypeIneq}
	Suppose $m$ and $n$ are positive integers, $m \leq n$, $V \in \mathbf
	V_m ( \mathbf R^n )$, $\| \delta V \|$ is a Radon measure, $a \in
	\mathbf R^n$, $0 < r < \infty$, and $\spt \| V \| \subset \mathbf B
	(a,r)$.
	
	Then,
	\begin{equation*}
		\boldsymbol{\alpha}(m)^{-1/m} r^{-1} \|V\| (\mathbf{R}^n) \leq
		\boldsymbol{\gamma}(m) \|\delta V\| (\mathbf{R}^n).
	\end{equation*}
\end{corollary}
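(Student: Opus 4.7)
The plan is to reduce the corollary to a direct application of \ref{definition:gamma} by exhibiting one value of the threshold~$d$ for which the maximal-type function~$M$ from \ref{miniremark:defMaxFunction} already attains that threshold throughout $\spt \| V \|$. Concretely, discarding the trivial case $\| V \| ( \mathbf R^n ) = 0$, I would set
\begin{equation*}
	d = \| V \| ( \mathbf R^n ) / ( \boldsymbol \alpha (m) r^m ) \in (0,\infty)
\end{equation*}
and observe that, since $\spt \| V \| \subset \mathbf B (a,r)$, every $x \in \spt \| V \|$ lies in the ball $\mathbf B (a,r)$ itself, whose density ratio $\| V \| \, \mathbf B (a,r) / ( \boldsymbol \alpha (m) r^m )$ equals $d$; by \ref{miniremark:defMaxFunction} this forces $M(x) \geq d$ for $\| V \|$~almost every $x$, whence $\| V \| ( \{ x \with M(x) \geq d \}) = \| V \| ( \mathbf R^n )$.

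Before invoking \ref{definition:gamma}, I would verify its finiteness hypothesis: $\| V \|$ is Radon and $\spt \| V \|$ is contained in the precompact set $\mathbf B (a,r)$, so $\| V \| ( \mathbf R^n ) < \infty$. Applying the defining property of $\boldsymbol \gamma (m)$ then yields
\begin{equation*}
	\| V \| ( \mathbf R^n )^{1-1/m} \leq \boldsymbol \gamma (m) d^{-1/m} \| \delta V \| ( \mathbf R^n );
\end{equation*}
substituting $d^{-1/m} = \boldsymbol \alpha (m)^{1/m} r \, \| V \| ( \mathbf R^n )^{-1/m}$, multiplying by $\| V \| ( \mathbf R^n )^{1/m}$, and dividing by $\boldsymbol \alpha (m)^{1/m} r$ produces the asserted bound.

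No serious obstacle arises: the entire content is the observation that the ball $\mathbf B (a,r)$ containing the support serves simultaneously as a test ball in the supremum defining $M$, so that the superlevel set at the natural threshold is essentially the whole support. The only minor points are the trivial $\| V \| = 0$ case and that finiteness of $\| \delta V \| ( \mathbf R^n )$ need not be assumed separately---if $\| \delta V \| ( \mathbf R^n ) = \infty$ the inequality is vacuous, while otherwise the argument proceeds as above.
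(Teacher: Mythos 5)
Your proposal is correct and takes essentially the same route as the paper: choose $d = \boldsymbol\alpha(m)^{-1} r^{-m} \| V \| ( \mathbf R^n )$, observe via \ref{miniremark:defMaxFunction} that $M(x) \geq d$ on $\mathbf B(a,r) \supset \spt \| V \|$, and apply \ref{definition:gamma}. The additional remarks on finiteness of $\| V \|(\mathbf R^n)$ and the vacuous case $\| \delta V \|(\mathbf R^n) = \infty$ are correct but not substantively different.
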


\begin{proof}
	Letting $d = \boldsymbol \alpha (m)^{-1} r^{-m} \| V \| ( \mathbf R^n
	)$ and assuming $d > 0$, it is sufficient to apply
	\ref{definition:gamma}, since $M(x) \geq d$ for $x \in \mathbf
	B(a,r)$, see \ref{miniremark:defMaxFunction}.
\end{proof}

\begin{miniremark} [Embeddings of weak Lebesgue spaces]
\label{miniremark:weak_lp_spaces}

	If $\phi$ measures $X$, $f$ is a $\phi$~measurable $\{ t \with 0 \leq
	t \leq \infty \}$~valued function, $\phi \, \{ x \with f(x) > 0 \} <
	\infty$, $1 \leq q < p < \infty$, and
	\begin{equation*}
		\kappa = \sup \big \{ d \, \phi ( \{ x \with f (x) \geq d \}
		)^{1/p} \with 0 < d < \infty \big \} < \infty,
	\end{equation*}
	then we have $\phi_{(q)} (f) \leq (1-q/p)^{-1/q} \phi ( \{ x \with
	f(x)>0 \})^{1/q-1/p} \kappa$.
\end{miniremark}

\begin{corollary} [General isoperimetric inequality with size]
\label{corollary:iso_ineq_with_size}
	Suppose $m$ and $n$ are positive integers, $2 \leq m \leq n$, $V \in
	\mathbf V_m ( \mathbf R^n )$, and $( \| V \| + \| \delta V \| ) (
	\mathbf R^n) < \infty$.

	Then,
	\begin{equation*}
		d \, \mathscr H^m ( \{ x \with \boldsymbol \Theta^m ( \| V \|,
		x) \geq d \} )^{1-1/m} \leq \boldsymbol \gamma (m) \| \delta V
		\| ( \mathbf R^n ) \quad \text{for $0 < d < \infty$};
	\end{equation*}
	in particular, if $V \in \mathbf {RV}_m ( \mathbf R^n )$ and $\mathscr
	H^m \, \{ x \with \boldsymbol \Theta^m ( \| V \|, x )
	> 0 \} < \infty$, then
	\begin{equation*}
		\| V \| ( \mathbf R^n ) \leq m \boldsymbol \gamma (m) \mathscr
		H^m ( \{ x \with \boldsymbol \Theta^m ( \| V \|, x ) > 0 \}
		)^{1/m} \| \delta V \| ( \mathbf R^n ).
	\end{equation*}
\end{corollary}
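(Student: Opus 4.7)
The plan is to deduce both inequalities from \ref{definition:gamma} by coupling it with a standard comparison between $\|V\|$ and $\mathscr{H}^m$ through densities; I assume throughout that $\|\delta V\|(\mathbf{R}^n) < \infty$, since otherwise both assertions are trivial.

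For the first inequality, let $B(d) = \{x \with \boldsymbol{\Theta}^m(\|V\|, x) \geq d\}$ and $A(d) = \{x \with M(x) \geq d\}$ with $M$ as in \ref{miniremark:defMaxFunction}. I would first record the inclusion $B(d) \subset A(d)$ by direct inspection: any $x$ with $\boldsymbol{\Theta}^m(\|V\|, x) \geq d$ satisfies $M(x) \geq d$, via $a = x$ and $s$ taken arbitrarily small.  Since every $x \in B(d)$ then also satisfies $\boldsymbol{\Theta}^{*m}(\|V\|, x) \geq d$, the standard density comparison \cite[2.10.19(1)]{MR41:1976} applied to the Borel regular measure $\|V\|$ furnishes
\begin{equation*}
	d \, \mathscr{H}^m(B(d)) \leq \|V\|(B(d)) \leq \|V\|(A(d));
\end{equation*}
raising this to the power $1 - 1/m$ and inserting \ref{definition:gamma} for $A(d)$ yields the first asserted inequality after rearrangement.

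For the second inequality, rectifiability of $V$ furnishes $\|V\| = \boldsymbol{\Theta}^m(\|V\|, \cdot) \mathscr{H}^m \restrict E$ with $E = \{x \with \boldsymbol{\Theta}^m(\|V\|, x) > 0\}$, so the layer cake identity gives
\begin{equation*}
	\|V\|(\mathbf{R}^n) = \tint 0 \infty \mathscr{H}^m(B(d)) \ud \mathscr{L}^1 \, d.
\end{equation*}
Writing $H = \mathscr{H}^m(E)$ and $\Delta = \boldsymbol{\gamma}(m) \|\delta V\|(\mathbf{R}^n)$, I would combine the trivial bound $\mathscr{H}^m(B(d)) \leq H$ with the bound $\mathscr{H}^m(B(d)) \leq (\Delta/d)^{m/(m-1)}$ furnished by the first inequality, then split the integral at the crossover point where these bounds agree.

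The main obstacle will be verifying that the constant comes out to be exactly $m$ rather than a larger number: the split should yield contributions of $\Delta H^{1/m}$ from the left piece and $(m-1) \Delta H^{1/m}$ from the right (by direct computation of the polynomial integral $\int d^{-m/(m-1)} \, \mathrm{d} d$), summing to the asserted $m \Delta H^{1/m}$. Beyond this, the argument is routine: Borel measurability of $\boldsymbol{\Theta}^m(\|V\|, \cdot)$ on its domain of existence and the applicability of \cite[2.10.19(1)]{MR41:1976} to $\|V\|$ are standard, as is the layer cake formula for the rectifiable case.
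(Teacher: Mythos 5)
Your proposal is correct and follows essentially the paper's own route: the principal inequality via the density comparison in Federer coupled with \ref{definition:gamma}, and the postscript via the representation $\|V\|=\boldsymbol\Theta^m(\|V\|,\cdot)\,\mathscr H^m\restrict E$ (Allard 3.5(1b)) together with the weak-$L^{p}$ embedding argument, which the paper simply cites as \ref{miniremark:weak_lp_spaces} with $p=m/(m-1)$, $q=1$, while your inline split at the crossover point is precisely the proof of that lemma. One small correction: the relevant item of Federer is \cite[2.10.19\,(3)]{MR41:1976} (upper density $\geq\lambda$ implies $\lambda\,\mathscr S^m(A)\leq\phi(V)$ for open $V\supset A$), not (1), which runs in the opposite direction.
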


\begin{proof}
	The principal conclusion is a consequence of~\ref{definition:gamma},
	as \cite[2.10.19\,(3)]{MR41:1976} yields
	\begin{equation*}
		\mathscr H^m \, \{ x \with \boldsymbol \Theta^m ( \| V \|, x )
		\geq d \} \leq d^{-1} \| V \| \, \{ x \with M(x) \geq d \}.
	\end{equation*}
	The postscript follows from \cite[3.5\,(1b)]{MR0307015} and
	\ref{miniremark:weak_lp_spaces} with $p = \frac{m}{m-1}$ and $q=1$.
\end{proof}

\section{Generalised weakly differentiable functions} \label{sec:tv}

This section extends the basic theory of generalised weakly differentiable
functions on rectifiable varifolds (see~\cite[\S\S\,8--9]{MR3528825}
and~\cite[4.1,\,2]{snulmenn.sobolev}) to general varifolds.  In fact, most of
it extends almost verbatim (see~\ref{remark:tv_old_and_new},
\ref{remark:composition}, \ref{remark:generalise_sect_8},
\ref{remark:generalise_sect_9}, and \ref{remark:TGV_multiplication}) once
suitable approximation procedures for Lipschitzian functions
(see~\ref{lemma:lip_approx} and~\ref{lemma:local_der}) are available to
replace the usages of the $(\| V\|, m)$~approximate differential
in~\cite[\S\S\,8--9]{MR3528825}.  It is unknown
(see~\ref{remark:open_question}), whether the role of that approximate
differential could be taken, for Lipschitzian functions, by the notion of
differentiability introduced by Alberti and Marchese in~\cite{MR3494485}.

A part that does not extend is the existence of decompositions
(see~\ref{example:decomposition}), hence the same holds for the
characterisation of functions with vanishing derivative
(see~\ref{remark:rect_necc}).  Nevertheless, a generalised weakly
differentiable function may, under the natural summability hypothesis, be
defined using a partition of the varifold induced by sets with vanishing
distributional boundary (see~\ref{thm:def_by_partition}).

\begin{lemma} [Disintegration for varifolds] \label{lemma:disintegration}
	Suppose $m$ and $n$ are positive integers, $m \leq n$, $U$ is an open
	subset of $\mathbf R^n$, and $V \in \mathbf V_m ( U )$.  Then,
	(see~\cite[3.3]{MR0307015})
	\begin{equation*}
		{\textstyle \int k \, \mathrm d V = \iint k(x,P) \, \mathrm d
		V^{(x)} \, P \, \mathrm d \| V \| \, x}
	\end{equation*}
	whenever $k$ is an $\overline{\mathbf R}$~valued $V$ integrable
	function.
\end{lemma}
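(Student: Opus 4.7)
The plan is to reduce the lemma to a direct application of Allard's disintegration result \cite[3.3]{MR0307015} (which itself rests on Federer \cite[2.5.20]{MR41:1976}). Viewing $V$ as a Radon measure on the locally compact Hausdorff space $U \times \mathbf G(n,m)$, and noting that $\mathbf G(n,m)$ is compact, the canonical projection $\pi : U \times \mathbf G(n,m) \to U$ is proper, so the image of $V$ under $\pi$ is again a Radon measure on $U$, and equals $\|V\|$ by definition of the weight.

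First I would invoke the cited disintegration theorem to produce a $\|V\|$~measurable family of Radon probability measures $\{V^{(x)} : x \in U\}$ on $\mathbf G(n,m)$ together with the identity
\begin{equation*}
	\int k \ud V = \iint k(x,P) \ud V^{(x)} P \ud \| V \| \, x,
\end{equation*}
valid initially for bounded Borel functions $k$ on $U \times \mathbf G(n,m)$ with compact $x$-support; in particular, the $\|V\|$~measurability of the inner integral $x \mapsto \int k(x,P) \ud V^{(x)} P$ is part of the conclusion of the disintegration theorem. Monotone convergence then extends the identity to arbitrary nonnegative Borel functions.

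Next I would pass from nonnegative Borel to nonnegative $V$~measurable $k$ by noting that such $k$ agree with a Borel function outside a $V$~null set $N \subset U \times \mathbf G(n,m)$; applying the displayed formula to the characteristic function of $N$ forces the slices $\{P : (x,P) \in N\}$ to be $V^{(x)}$~null for $\|V\|$~almost every $x$, so modifying $k$ on $N$ affects neither side. Finally, for a general $\overline{\mathbf R}$~valued $V$~integrable $k$, the hypothesis of $V$~integrability means at most one of $\int k^+ \ud V$ and $\int k^- \ud V$ is infinite; applying the nonnegative case separately to $k^+$ and $k^-$ yields the stated identity with no ambiguity on the right-hand side, since the analogous $\|V\|$~almost everywhere finiteness of $\int k^\mp(x,P) \ud V^{(x)} P$ is inherited from the corresponding finite integral over~$V$.

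The only real obstacle is the existence of the disintegrating family together with the $\|V\|$~measurability of the inner integral; both are furnished by Federer's general disintegration theorem, which applies precisely because $\mathbf G(n,m)$ is compact and $V$ is Radon. Beyond this, all that remains is routine measure-theoretic approximation, which is why the authors content themselves with citing \cite[3.3]{MR0307015}.
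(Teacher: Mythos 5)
The proposal is correct and follows essentially the same plan as the paper's proof: invoke Allard~\cite[3.3]{MR0307015} (which rests on Federer's general disintegration theorem) for the basic case, then extend by standard measure-theoretic approximation to $V$~measurable sets/functions and finally to $\overline{\mathbf R}$~valued $V$~integrable functions. The paper's two-sentence proof instead routes the approximation through characteristic functions of $V$~measurable sets via \cite[2.5.3, 2.5.13, 2.5.14]{MR41:1976} and finishes with \cite[2.3.3, 2.4.8, 2.4.4\,(6)]{MR41:1976}, but this is a bookkeeping difference, not a different idea; your passage through Borel functions and a $V$~null modification set accomplishes the same thing (with the small refinement, which you should make explicit, that one should pass to a Borel superset of the null set before applying the identity to its characteristic function).
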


\begin{proof}
	The case $k \in \mathscr K ( U \times \mathbf G(n,m) )$ is treated by
	Allard in~\cite[3.3]{MR0307015}.
	Noting~\cite[2.5.13,\,14]{MR41:1976}, successive approximation by the
	method of~\cite[2.5.3]{MR41:1976} yields the case that $k$ is a
	characteristic function of a $V$~measurable set.  Finally, we employ
	\cite[2.3.3,\,4.8,\,4.4\,(6)]{MR41:1976} to deduce the general case.
\end{proof}

\begin{definition} [Generalised~$V$~weakly differentiable functions]
	\label{definition:genWeakDiffFct}
	Suppose $m$ and $n$ are positive integers, $m \leq n$, $U$ is an open
	subset of $\mathbf{R}^n$, $V\in\mathbf{V}_m(U)$, $\|\delta V\|$ is a
	Radon measure, and $Y$ is a finite dimensional normed vector space.

	Then, a $Y$ valued $\|V\|+\|\delta V\|$ measurable function $f$ with
	$\dmn f\subset U$ is called \emph{generalised $V$ weakly
	differentiable} if and only if for some $\|V\|$ measurable
	$\Hom(\mathbf{R}^n,Y)$ valued function $F$, the following two
	conditions hold:
	\begin{enumerate}
		\item If $K$ is a compact subset of $U$ and $0\leq s<\infty$,
		then
		\begin{equation*}
			{\textstyle\int_{K\cap\{x \with |f(x)|\leq
			s\}}\|F\|\,\mathrm d\|V\|<\infty}.
		\end{equation*}
		\item If $\theta\in\mathscr{D}(U,\mathbf{R}^n)$,
		$\gamma\in\mathscr{E}(Y,\mathbf{R})$ and $\spt \mathrm
		D\gamma$ is compact, then
		\begin{equation*}
			(\delta V)((\gamma\circ f)\theta)
			=
			{\textstyle\int\gamma(f(x))P_\natural\bullet\mathrm
			D\theta(x)+
			\langle\theta(x),\mathrm D\gamma(f(x))\circ
			F(x)\rangle\,\mathrm dV\, (x,P)}.
		\end{equation*} 
	\end{enumerate}
	The function $F$ is $\|V\|$ almost unique.  Therefore, one may define
	the \emph{generalised $V$~weak derivative of~$f$} to be the function
	$V\,\mathbf{D}f$ characterised (see
	\cite[2.8.9,\,14,\,9.13]{MR41:1976}) by $a\in\dmn V\,\mathbf{D}f$ if
	and only if
	\begin{align*}
		& (\|V\|,C)\aplim_{x\to a}F(x)=\sigma\quad \text{for some
		}\sigma\in\Hom(\mathbf{R}^n,Y), \\
		& \qquad \text{where $C = \{ (a,\mathbf{B}(a,r)) \with
		\text{$a \in \mathbf R^n$, $0 < r < \infty$, and
		$\mathbf{B}(a,r) \subset U$} \}$},
	\end{align*}
	and, in this case, $V\,\mathbf{D}f(a)=\sigma$.  The set of all
	$Y$~valued generalised $V$~weakly differentiable functions will be
	denoted by $\mathbf{T}(V,Y)$.  Finally, $\mathbf{T}(V) = \mathbf{T}
	(V,\mathbf{R})$.
\end{definition}

\begin{remark}
	This definition is in accordance with \cite[8.3]{MR3528825}, where it
	is introduced under the additional hypothesis that $V$ is rectifiable.
\end{remark}

\begin{remark} \label{remark:tv_old_and_new}
	The closedness results \cite[4.1,\,2]{snulmenn.sobolev} hold (with the
	same proof) when the condition $V \in \mathbf{RV}_m ( U)$ in their
	statements is replaced by $V \in \mathbf{V}_m ( U )$.
\end{remark}

\begin{example} \label{example:c1}
	If $f: U \to Y$ is of class~$1$, then $f \in \mathbf T(V,Y)$ and
	(see~\cite[3.3]{MR0307015})
	\begin{equation*}
		{\textstyle V \, \mathbf Df(x) = \mathrm Df(x) \circ \int
		P_\natural \, \mathrm d V^{(x)} \, P \quad \text{for $\| V
		\|$~almost all~$x$};}
	\end{equation*}
	hence, $\| V \, \mathbf Df(x) \| \leq \big \| \mathrm Df(x) |
	T(x) \big \|$, where $T(x) = \im \int P_\natural \, \mathrm d V^{(x)}
	\, P$, for such~$x$.
\end{example}

\begin{lemma} [Lipschitzian functions~I]
\label{lemma:lip_approx}
	Suppose $m$, $n$, $U$, $V$, and $Y$ are as
	in~\ref{definition:genWeakDiffFct}.  Then, the following two
	statements hold.
	\begin{enumerate}
		\item \label{item:lip_approx:lip} If $f : U \to Y$ is a
		locally Lipschitzian function, then $f \in \mathbf{T} (V,Y)$
		and
		\begin{equation*}
			\| V \, \mathbf Df (x) \| \leq \lim_{r \to 0+} \Lip (
			f | \mathbf{B} (x,r) ) \quad \text{for $\| V \|$
			almost all $x$}.
		\end{equation*}
		\item \label{item:lip_approx:convergence} If $f_i : U \to Y$
		is a sequence of locally Lipschitzian functions
		converging to $f : U \to Y$ locally uniformly as $i \to
		\infty$, and
		\begin{equation*}
			\kappa = \sup \{ \| V \|_{(\infty)} ( V \, \mathbf
			Df_i ) \with i = 1, 2, 3, \ldots \} < \infty,
		\end{equation*}
		then $f \in \mathbf{T} (V,Y)$, $\| V \|_{(\infty)} ( V \,
		\mathbf Df ) \leq \kappa$, and
		\begin{equation*}
			\lim_{i \to \infty} {\textstyle \int \langle
			V\,\mathbf Df_i, G \rangle \ud \| V \| = \int \langle
			V\,\mathbf Df, G \rangle \ud \| V \|}
		\end{equation*}
		whenever $G \in \mathbf{L}_1 \big ( \| V \|, \Hom (
		\mathbf{R}^n,Y )^\ast \big )$.
	\end{enumerate}
\end{lemma}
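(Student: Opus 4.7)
The plan is to establish (1) by mollification combined with a weak-$*$ compactness argument, and then to derive (2) by essentially the same scheme.

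For (1), fix a sequence $\epsilon_i \to 0+$ and, on $U_{\epsilon_i} = \{x \in U \with \mathrm{dist}(x, \mathbf R^n \without U) > \epsilon_i\}$, let $f_{\epsilon_i} = f * \eta_{\epsilon_i}$ for a standard radial mollifier. Each $f_{\epsilon_i}$ is smooth, converges to $f$ locally uniformly on $U$, and satisfies $\|\mathrm D f_{\epsilon_i}(x)\| \leq \Lip(f | \mathbf B(x, \epsilon_i))$. By Example \ref{example:c1} applied to $V \restrict (U_{\epsilon_i} \times \mathbf G(n,m))$, $f_{\epsilon_i}$ is generalised $V \restrict (U_{\epsilon_i} \times \mathbf G(n,m))$ weakly differentiable with $\|V\,\mathbf D f_{\epsilon_i}(x)\| \leq \Lip(f | \mathbf B(x, \epsilon_i))$ for $\|V\|$-a.e.\ $x \in U_{\epsilon_i}$. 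The sequence is therefore locally bounded in $\mathbf L_\infty(\|V\|, \Hom(\mathbf R^n, Y))$, so Banach--Alaoglu and a diagonal argument furnish a weak-$*$ subsequential limit $F$ on every relatively compact open subset of $U$.

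Pass to the limit in the identity of Definition \ref{definition:genWeakDiffFct}(2) for $f_{\epsilon_i}$, against any $\theta \in \mathscr D(U, \mathbf R^n)$ and $\gamma \in \mathscr E(Y, \mathbf R)$ with $\spt \mathrm D \gamma$ compact: the left-hand side and the first right-hand summand converge by uniform convergence of $\gamma \circ f_{\epsilon_i}$ on $\spt \theta$; rewriting the second summand via Lemma \ref{lemma:disintegration} as the pairing of $V\,\mathbf D f_{\epsilon_i}$ with $\theta \otimes (\mathrm D \gamma \circ f_{\epsilon_i}) \in \mathbf L_1(\|V\|)$, this is the product of a weak-$*$ convergent sequence in $\mathbf L_\infty$ with a strongly convergent one in $\mathbf L_1$. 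Hence $f \in \mathbf T(V, Y)$ with $V\,\mathbf D f = F$ $\|V\|$-almost everywhere, and the pointwise bound follows from the weak-$*$ lower semicontinuity estimate
\begin{equation*}
	{\textstyle \int_{\mathbf B(a,r)} \|F\| \ud \|V\| \leq \inf_{s>r} \Lip(f | \mathbf B(a, s))\, \|V\|\, \mathbf B(a,r)}
\end{equation*}
combined with Lebesgue differentiation at $\|V\|$-density points and the monotonicity of $r \mapsto \Lip(f|\mathbf B(a,r))$.

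Claim (2) then follows by the same compactness scheme applied to the $\kappa$-bounded sequence $V\,\mathbf D f_i \in \mathbf L_\infty(\|V\|)$ (where each $f_i \in \mathbf T(V, Y)$ by (1)): extract a weak-$*$ subsequential limit $G$ satisfying $\|V\|_{(\infty)}(G) \leq \kappa$; pass to the limit in the defining identity as above to obtain $f \in \mathbf T(V, Y)$ with $V\,\mathbf D f = G$ $\|V\|$-a.e.; the $\|V\|$-a.e.\ uniqueness of $V\,\mathbf D f$ then elevates the convergence to the full sequence, which is precisely the stated weak-$*$ convergence against arbitrary $\mathbf L_1$ test maps. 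Alternatively, one may invoke the closedness result of Remark \ref{remark:tv_old_and_new}. The principal obstacle is the pointwise bound in (1): weak-$*$ convergence yields only integrated control, and the interchange of the $\epsilon_i \to 0$ and $r \to 0$ limits in the Lebesgue-differentiation step must be justified using the monotonicity of the local Lipschitz constant.
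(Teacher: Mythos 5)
Your proposal is correct and rests on the same machinery as the paper (convolution, Example~\ref{example:c1}, weak-$*$ compactness in $\mathbf L_\infty(\|V\|,\Hom(\mathbf R^n,Y))$, and closedness of $\mathbf T(V,Y)$ under this convergence), but the logical ordering is inverted. The paper first proves a variant \eqref{item:lip_approx:convergence}$'$ of \eqref{item:lip_approx:convergence} in which the $f_i$ are additionally assumed to lie in $\mathbf T(V,Y)$; there the weak-$*$ compactness of norm-bounded sets in $\mathbf L_\infty$ produces $F$, and the pre-existing closedness result (\ref{remark:tv_old_and_new} together with \cite[4.1]{snulmenn.sobolev}) yields $f \in \mathbf T(V,Y)$ with $V\,\mathbf Df = F$ in one stroke. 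Statement \eqref{item:lip_approx:lip} is then deduced from \eqref{item:lip_approx:convergence}$'$ by convolution and Example~\ref{example:c1}, after which \eqref{item:lip_approx:convergence} becomes identical with \eqref{item:lip_approx:convergence}$'$. You instead prove \eqref{item:lip_approx:lip} directly by mollification and pass to the limit by hand in the identity of Definition~\ref{definition:genWeakDiffFct}\,(2), then re-run essentially the same compactness scheme for \eqref{item:lip_approx:convergence}. That is fine, but it means you re-derive a special case of the closedness theorem you could simply have cited, and you run the compactness argument twice; the paper's ordering is slightly more economical. Two minor points worth tightening in your write-up: you verify only condition (2) of Definition~\ref{definition:genWeakDiffFct} for the limit, while condition (1) should also be noted (it is immediate here from the local $\mathbf L_\infty$ bound); and the step that transfers the a.e.\ pointwise bound $\|V\,\mathbf Df_{\epsilon_i}\| \le \Lip(f|\mathbf B(\cdot,\epsilon_i))$ through the weak-$*$ limit deserves an explicit appeal to the weak-$*$ closedness of the convex sets $\{G \with \|G(x)\|\le c \text{ for } \|V\| \text{ a.e. } x \in \mathbf B(a,\rho)\}$, after which your Lebesgue-differentiation argument (justified by \cite[2.8.9,\,18]{MR41:1976} for the Radon measure $\|V\|$) finishes.
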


\begin{proof}
	Let \eqref{item:lip_approx:convergence}$'$ denote the statement
	resulting from \eqref{item:lip_approx:convergence} by adding the
	hypothesis, that the sequence~$f_i$ belongs to $\mathbf T(V,Y)$.  To
	prove \eqref{item:lip_approx:convergence}$'$, by
	\cite[2.5.7\,(ii)]{MR41:1976}, \cite[2.1]{snulmenn.sobolev}, and
	\cite[V.4.2,\,5.1]{MR90g:47001a}, we may assume that, for some $F \in
	\mathbf{L}_\infty ( \| V \|, \Hom ( \mathbf{R}^n, Y ) )$ with $\| V
	\|_{(\infty)} (F) \leq \kappa $, we have
	\begin{equation*}
		\lim_{i \to \infty} {\textstyle \int \langle V\,\mathbf Df_i,
		G \rangle \, \mathrm d \| V \| = \int \langle F, G \rangle \,
		\mathrm d \| V \|} \quad \text{for $G \in \mathbf{L}_1 \big (
		\| V \|, \Hom ( \mathbf{R}^n, Y )^\ast \big )$}.
	\end{equation*}
	Then, \ref{remark:tv_old_and_new} and~\cite[4.1]{snulmenn.sobolev}
	yield \eqref{item:lip_approx:convergence}$'$ which, as we observe,
	implies~\eqref{item:lip_approx:lip} by means of convolution and
	\ref{example:c1}; in particular,
	\eqref{item:lip_approx:convergence}$'$ and
	\eqref{item:lip_approx:convergence} are equivalent.
\end{proof}

\begin{remark} \label{remark:lipschitzian}
	\eqref{item:lip_approx:lip} partly generalises \cite[8.7]{MR3528825}.
	In the remaining part thereof (i.e., in the
	characterisation of $V \, \mathbf Df$ in terms of the $(\| V \|,
	m)$~approximate differential), the hypothesis $V \in \mathbf{RV}_m ( U
	)$ may not be weakened to $V \in \mathbf V_m ( U )$; in fact,
	\ref{example:c1} and \cite[4.8\,(2)]{MR0307015} readily yield
	examples.
\end{remark}

\begin{remark} \label{remark:open_question}
	Here, we formulate two open questions which relate the preceding
	remark to the differentiability theory of Lipschitzian functions by
	Alberti and Marchese~(see \cite[1.1]{MR3494485}).  For this purpose,
	suppose $m$ and $n$ are positive integers, $m < n$, $V \in \mathbf V_m
	( \mathbf R^n )$, and $\| \delta V \|$ is a Radon measure.
	\begin{enumerate}
		\item Does it follow that, for $\| V \|$~almost all~$x$, the
		image of $q(x)=\int P_\natural \, \mathrm d V^{(x)} \, P \in
		\Hom ( \mathbf R^n, \mathbf R^n )$ is contained in the
		``decomposability bundle of~$\| V \|$'' at~$x$ introduced by
		Alberti and Marchese in \cite[2.6]{MR3494485}?
		\item If so, does it follow that, for Lipschitzian functions
		$f : \mathbf R^n \to Y$,
		\begin{equation*}
			V\, \mathbf{D} f(x) = \mathrm D (f|Q(x)) (x) \circ q
			(x) \quad \text{for $\| V \|$~almost all~$x$},
		\end{equation*}
		where $Q(x) = \{ x + \langle v,q(x) \rangle \with v \in
		\mathbf R^n \}$?
	\end{enumerate}
\end{remark}

\begin{remark}
	\ref{lemma:lip_approx}\,\eqref{item:lip_approx:convergence} is
	analogous to \cite[4.5\,(3)]{MR2898736}.
\end{remark}

\begin{remark} \label{remark:composition}
	Referring to \ref{example:c1} and
	\ref{lemma:lip_approx}\,\eqref{item:lip_approx:convergence} in place
	of \cite[4.5\,(3)]{MR2898736}, and noting \ref{lemma:disintegration},
	the result in \cite[8.6]{MR3528825} takes the following form: \emph{If
	$f \in \mathbf{T} (V,Y)$, $\theta : U \to \mathbf{R}^n$ is
	Lipschitzian with compact support, $\gamma : Y \to \mathbf{R}$ is of
	class~$1$, and either $\spt \mathrm D\gamma$ is compact or $f$ is
	locally bounded, then
	\begin{equation*}
		( \delta V) ( ( \gamma \circ f ) \theta ) {\textstyle = \int
		\gamma (f(x)) \trace ( V \, \mathbf{D} \theta (x) ) + \langle
		\theta (x), \mathrm D \gamma (f(x)) \circ V \, \mathbf{D} f(x)
		\rangle \, \mathrm d \| V \| \, x.}
	\end{equation*}
	Consequently, if $f \in \mathbf{T} (V,Y)$ is locally bounded, $Z$ is
	finite dimensional normed vector space, and $g : Y \to Z$ is of
	class~$1$, then $g \circ f \in \mathbf{T} (V,Z)$ with
	\begin{equation*}
		V \, \mathbf D(g \circ f) (x) = \mathrm Dg(f(x)) \circ V \,
		\mathbf Df(x) \quad \text{for $\| V \|$ almost all $x$}.
	\end{equation*}}
	We notice that, by~\cite[8.7]{MR3528825}, this is a generalisation
	of~\cite[8.6]{MR3528825}.
\end{remark}

\begin{remark} \label{remark:generalise_sect_8}
	The results
	\cite[8.4,\,5,\,12,\,15,\,16,\,18,\,20,\,29,\,30,\,33]{MR3528825}
	remain valid when the references to ``Definition~8.3'' in
	\cite{MR3528825} in their statements and proofs are replaced by
	references to the present, more general definition
	in~\ref{definition:genWeakDiffFct}; in fact, it is sufficient to
	additionally replace the references to ``Remark~8.6'' in their proofs
	in~\cite{MR3528825} by references to \ref{remark:composition} in the
	present paper and use (instead of ``Example~8.7'' in~\cite{MR3528825})
	an approximation based on convolution,~\ref{example:c1},
	and~\ref{lemma:lip_approx}, to justify the second ingredient to the
	equality on page~1029, line~25 in \cite{MR3528825}: namely, the
	equation
	\begin{equation*}
		\langle u, V\,\mathbf D ( ( \nu \circ g ) \theta ) (x) \rangle
		= \langle u, \mathrm D \nu (g(x)) \circ V \, \mathbf Dg(x)
		\rangle \theta (x) + \nu (g(x)) \langle u, V\, \mathbf D\theta
		(x) \rangle
	\end{equation*}
	whenever $u \in \mathbf R^n$, for $\|V \|$~almost all~$x$.
\end{remark}

\begin{example} [Nonexistence of decompositions] \label{example:decomposition}
	Suppose $m$ and $n$ are positive integers, $m < n$, and $T \in \mathbf
	G(n,m)$.  Then the following three statements hold.
	\begin{enumerate}
		\item \label{item:decomposition:mu} If $\mu$ is a Radon
		measure over~$\mathbf R^n$, $V = \mu \times \boldsymbol
		\delta_T \in \mathbf V_m ( \mathbf R^n )$, $\| \delta V \|$ is
		a Radon measure, $f : \mathbf R^n \to \mathbf R$ is of
		class~$1$ with $\mathrm Df(x)|T =0$ for $x \in \mathbf R^n$,
		and $E(y) = \{ x \with f(x)>y \}$ for $y \in \mathbf R$, then
		$V \, \partial E(y) = 0$ for $y \in \mathbf R$.
		\item \label{item:decomposition:E} If $E$ is an $\mathscr L^n$
		measurable set, $V = ( \mathscr L^n \restrict E ) \times
		\boldsymbol \delta_T \in \mathbf V_m ( \mathbf R^n )$, $\|
		\delta V \|$ is a Radon measure, and $V$ is indecomposable,
		then $V = 0$.
		\item \label{item:decomposition:no} If $V = \mathscr L^n
		\times \boldsymbol \delta_T \in \mathbf V_m ( \mathbf R^n )$,
		then $\delta V = 0$ and there does not exist a decomposition
		of~$V$.
	\end{enumerate}
	To prove \eqref{item:decomposition:mu}, we notice $f \in \mathbf T
	(V)$ and $V \, \mathbf Df(x) = 0$ for $\| V \|$~almost all~$x$ by
	\ref{example:c1}, whence we deduce the assertion by means of
	\ref{remark:generalise_sect_8} and \cite[8.29]{MR3528825}.  Moreover,
	\eqref{item:decomposition:mu} yields \eqref{item:decomposition:E} by
	taking $f$ to be a nonzero member of $\Hom ( \mathbf R^n , \mathbf R
	)$ with $T \subset \ker f$.  Finally,
	Allard~\cite[4.8\,(2)]{MR0307015} and \eqref{item:decomposition:E}
	imply \eqref{item:decomposition:no}.
\end{example}

\begin{remark} \label{remark:rect_necc}
	\ref{example:decomposition}\,\eqref{item:decomposition:no} shows that
	the rectifiability hypotheses in \cite[6.12, 8.34]{MR3528825} may not
	be omitted.
\end{remark}

\begin{theorem} [Weakly differentiable functions by partitions] \label{thm:def_by_partition}
	Suppose $m$, $n$, $U$, $V$, and $Y$ are as in
	\ref{definition:genWeakDiffFct}, $\Xi$ is a countable subset of
	$\mathbf V_m ( U )$, $\xi$~maps $\Xi$ into the class of all Borel
	subsets of~$U$ such that distinct members of $\Xi$ are mapped onto
	disjoint sets, $( \| V \| + \| \delta V \| ) \big ( U \without \bigcup
	\im \xi \big ) = 0$,
	\begin{gather*}
		\text{$W = V \restrict \xi ( W ) \times \mathbf G(n,m)$ and $V
		\, \partial \xi ( W ) = 0$} \quad \text{for $W \in \Xi$},
	\end{gather*}
	$f_W \in \mathbf T (W,Y)$ for $W \in \Xi$, and
	\begin{equation*}
		{\textstyle f = \bigcup \{ f_W | \xi ( W ) \with W \in \Xi \},
		\quad F = \bigcup \{ (W \, \mathbf D f_W ) | \xi ( W ) \with W
		\in \Xi \}}.
	\end{equation*}

	Then, the following three statements hold:
	\begin{enumerate}
		\item The function $f$ is $\| V \| + \| \delta V \|$
		measurable.
		\item The function $F$ is $\| V \|$ measurable.
		\item If $\int_{K \cap \{ x \with |f(x)| \leq s \}} \| F \| \,
		\mathrm d \| V \| < \infty$ whenever $K$ is a compact subset
		of~$U$ and $0 \leq s < \infty$, then $f \in \mathbf T(V,Y)$ and
		\begin{equation*}
			V \, \mathbf Df(x) = F (x) \quad \text{for $\| V
			\|$~almost all~$x$}.
		\end{equation*}
	\end{enumerate}
\end{theorem}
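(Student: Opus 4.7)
My plan is to verify the two defining conditions of Definition~\ref{definition:genWeakDiffFct} for $f$ with candidate derivative $F$, after disposing of the measurability claims. For (1) and (2), since $\Xi$ is countable and $\xi$ takes values in disjoint Borel subsets of $U$ that cover $U$ up to a $(\|V\|+\|\delta V\|)$ null set, it suffices to check that each restriction $f_W | \xi(W)$ is $(\|V\|+\|\delta V\|)$ measurable and that each $(W\,\mathbf{D}f_W)|\xi(W)$ is $\|V\|$ measurable. The hypothesis $W = V \restrict \xi(W) \times \mathbf{G}(n,m)$ yields $\|W\| = \|V\| \restrict \xi(W)$, while $V\,\partial \xi(W) = 0$ yields $\delta W = (\delta V) \restrict \xi(W)$ and hence $\|\delta W\| = \|\delta V\| \restrict \xi(W)$, so the required measurabilities transfer from $f_W$ and $W\,\mathbf{D}f_W$ to their restrictions.

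For statement (3), condition (1) of Definition~\ref{definition:genWeakDiffFct} is precisely the integrability hypothesis. For condition (2), given test data $\theta \in \mathscr{D}(U,\mathbf{R}^n)$ and $\gamma \in \mathscr{E}(Y,\mathbf{R})$ with $\spt \mathrm{D}\gamma$ compact (so that $\gamma$ is bounded), the idea is to decompose along the partition:
\begin{equation*}
(\delta V)((\gamma \circ f)\theta) = \sum_{W \in \Xi}((\delta V) \restrict \xi(W))((\gamma \circ f)\theta) = \sum_{W \in \Xi}(\delta W)((\gamma \circ f_W)\theta),
\end{equation*}
using that $\|\delta W\|$ is concentrated on $\xi(W)$, where $f = f_W$. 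Applying the defining equation for $f_W \in \mathbf{T}(W,Y)$ to each summand and then rewriting the resulting integral against $W$ as one against $V \restrict \xi(W) \times \mathbf{G}(n,m)$ produces, after summation and use of $(\|V\|+\|\delta V\|)(U \without \bigcup \im \xi)=0$, the right-hand side of condition (2) for $f$ and $F$. The identification $V\,\mathbf{D}f(x) = F(x)$ for $\|V\|$ almost every $x$ then follows from the $\|V\|$ almost uniqueness of such $F$ together with the Lebesgue differentiation theorem (\cite[2.9.13]{MR41:1976}) applied to the $\|V\|$ measurable function $F$ in the context $C$.

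The principal obstacle I anticipate is the careful justification of the decomposition above. First, interchanging sum and distribution requires absolute summability, which is supplied by $|(\gamma \circ f)\theta| \leq (\sup|\gamma|)|\theta| \in \mathbf{L}_1(\|\delta V\|)$ on the left, and, for the derivative contribution to the right-hand side, by the integrability hypothesis applied with $K = \spt \theta$ and $s = \sup\{|y|\with y \in \spt \mathrm{D}\gamma\}$. Second, the identity $(\delta V) \restrict \xi(W) = \delta W$ has to be derived from $V\,\partial \xi(W) = 0$ via the definitions of the distributional boundary and of the restriction of a representable distribution recalled in Section~\ref{sec:notation}, and it is at this point that the assumption that $\|\delta V\|$ is a Radon measure is essential.
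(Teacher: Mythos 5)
Your proposal is correct and follows essentially the same approach as the paper: the paper's proof consists of a single sentence delegating to the proof of \cite[8.24]{MR3528825}, and your argument is a transparent explicit version of exactly that verification — reducing to a countable disjoint Borel partition, identifying $\|W\| = \|V\|\restrict\xi(W)$ and $\delta W = (\delta V)\restrict\xi(W)$ from $V\,\partial\xi(W)=0$, summing the defining identities for the $f_W$, and invoking the $\|V\|$ almost uniqueness of the weak derivative together with approximate continuity (\cite[2.8.9, 2.9.13]{MR41:1976}) to conclude $V\,\mathbf Df = F$. The one step worth underscoring, which you do flag, is that the distributional restriction $(\delta V)\restrict\xi(W)$ is only meaningful because $\|\delta V\|$ is Radon, and that the absolute summability making the term-by-term interchange legitimate rests precisely on the integrability hypothesis of item (3) applied with $K=\spt\theta$ and $s=\sup\{|y|\with y\in\spt\mathrm D\gamma\}$.
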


\begin{proof}
	The proof of \cite[8.24]{MR3528825} applies unchanged.
\end{proof}

\begin{remark}
	In view of \ref{remark:rect_necc}, it is important that the preceding
	generalisation of \cite[8.24]{MR3528825} does not assume the members
	of $\Xi$ to be indecomposable.
\end{remark}

\begin{definition} [Zero boundary values] \label{definition:TGV}
	Suppose $m$ and $n$ are positive integers, $m \leq n$, $U$ is an open
	subset of~$\mathbf R^n$, $V \in \mathbf V_m ( U )$, $\| \delta V\|$ is
	a Radon measure, and $G$ is a relatively open subset of~$\Bdry U$.
	Then, $\mathbf T_G (V)$ is defined to be the set of all nonnegative
	functions $f \in \mathbf T (V)$ such that, with $B = ( \Bdry U )
	\without G$ and $E(y) = \{ x \with f(x) > y \}$ for $0 < y < \infty$,
	the following conditions hold for $\mathscr{L}^1$~almost all $0 < y
	<\infty$:
	\begin{gather*}
		( \| V \| + \| \delta V \| ) (E(y) \cap K ) + \| V \, \partial
		E (y) \| ( U \cap K ) < \infty, \\
		{\textstyle\int_{E(y) \times \mathbf G (n,m)} P_\natural
		\bullet \mathrm D\theta (x) \, \mathrm d V \, (x,P) = ( (
		\delta V ) \restrict E(y) ) ( \theta | U ) - V \, \partial
		E(y) ( \theta | U )}
	\end{gather*}
	whenever $K$ is a compact subset of $\mathbf R^n \without B$ and
	$\theta \in \mathscr{D} ( \mathbf{R}^n \without B, \mathbf R^n )$.
\end{definition}

\begin{remark} \label{remark:wy}
	Defining $W_y \in \mathbf V_m ( \mathbf R^n \without B )$ by
	\begin{equation*}
		W_y ( k ) = {\textstyle \int_{E(y) \times \mathbf G (n,m)} k \,
		\mathrm d V} \quad \text{for $k \in \mathscr K ( ( \mathbf R^n
		\without B ) \times \mathbf G (n,m) )$}
	\end{equation*}
	for $0 < y < \infty$, we see that, whenever $y$ satisfies the
	conditions of \ref{definition:TGV}, we have
	\begin{equation*}
		\| \delta W_y \| ( A ) \leq \| \delta V \| ( E(y) \cap A ) +
		\| V \, \partial E(y) \| ( U \cap A ) \quad \text{for $A
		\subset \mathbf R^n \without B$};
	\end{equation*}
	in particular, $\| \delta W_y \|$ is a Radon measure for such $y$.
\end{remark}

\begin{remark} \label{remark:generalise_sect_9}
	The definition in \ref{definition:TGV} is in accordance with
	\cite[9.1]{MR3528825}, where it is stated under the additional
	hypothesis that $V$ is rectifiable.  Moreover, the results of
	\cite[9.2,\,4,\,5,\,9,\,12,\,13,\,14]{MR3528825} remain valid if the
	references to ``Definition~9.1'' in their statements and proofs
	in~\cite{MR3528825} are replaced by references to the present, more
	general definition in \ref{definition:TGV}; in fact, taking
	\ref{remark:generalise_sect_8} into account, the proofs remain
	otherwise unchanged.
\end{remark}

\begin{lemma} [Lipschitzian functions~II] \label{lemma:local_der}
	Suppose $m$, $n$, $U$, $V$, and $G$ are as in \ref{definition:TGV}, $c
	: U \cup G \to \mathbf R$, $\Lip (c|K) < \infty$ whenever $K$ is a
	compact subset of $\mathbf R^n \without B$, $g=c|U$, $D$ is a $\| V
	\|$~measurable set, $W \in \mathbf V_m ( \mathbf R^n \without B)$,
	$W(k) = \int_{D \times \mathbf G(n,m)} k \, \mathrm d V$ for $k \in
	\mathscr K ( ( \mathbf R^n \without B ) \times \mathbf G (n,m) )$, and
	$\| \delta W \|$~is a Radon measure.

	Then, there holds $c \in \mathbf T (W)$, $g \in \mathbf T (V)$, and
	\begin{equation*}
		W \, \mathbf Dc(x) = V \, \mathbf Dg (x) \quad \text{for $
		\| V \|$~almost all $x \in D$}.
	\end{equation*}
\end{lemma}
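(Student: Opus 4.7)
The strategy is to extend~$c$ to a locally Lipschitzian function on~$\mathbf R^n \without B$, apply Lemma~\ref{lemma:lip_approx}\,\eqref{item:lip_approx:lip} to obtain the memberships, and identify the two derivatives on~$D$ by convolution combined with Lemma~\ref{lemma:lip_approx}\,\eqref{item:lip_approx:convergence}. Note that $\mathbf R^n \without B$ is open in~$\mathbf R^n$ and $U \cup G$ is closed therein, its complement being the open set~$\mathbf R^n \without \overline U$; hence a standard Whitney/McShane extension, performed on balls compactly contained in~$\mathbf R^n \without B$ and patched by a partition of unity, yields a locally Lipschitzian $\tilde c : \mathbf R^n \without B \to \mathbf R$ with $\tilde c | ( U \cup G ) = c$. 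The restriction $g = \tilde c | U$ is locally Lipschitzian, and Lemma~\ref{lemma:lip_approx}\,\eqref{item:lip_approx:lip} applied first to~$V$ and then to~$W$ gives $g \in \mathbf T(V)$ and $\tilde c \in \mathbf T(W)$. Since $D \subset U$, both $\| W \|$ and $\| \delta W \|$ are supported in $\overline D \cap ( \mathbf R^n \without B ) \subset U \cup G$, on which $c = \tilde c$; consequently $c \in \mathbf T(W)$ and $W\,\mathbf D c = W\,\mathbf D \tilde c$ $\| W \|$~almost everywhere.

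For the identity on~$D$, fix a compact $K \subset U$ and $\phi \in \mathscr D(U, \mathbf R)$ with $\phi \equiv 1$ near~$K$. Set $f = \phi g$, extended by zero to~$\mathbf R^n \without B$; then $f = \phi \tilde c$, $f$ is Lipschitzian, and $\spt f \subset U$. The mollifications $f_\varepsilon = f \ast \rho_\varepsilon$ are smooth, converge locally uniformly to~$f$, and satisfy $\Lip ( f_\varepsilon ) \leq \Lip ( f )$ for all small~$\varepsilon$. By Example~\ref{example:c1}, both $V\,\mathbf D f_\varepsilon(x)$ and $W\,\mathbf D f_\varepsilon(x)$ equal
\begin{equation*}
	{\textstyle \mathrm D f_\varepsilon (x) \circ \int P_\natural \, \mathrm d V^{(x)} \, P = \mathrm D f_\varepsilon (x) \circ \int P_\natural \, \mathrm d W^{(x)} \, P}
\end{equation*}
for $\|V\|$~almost every $x \in D$, where the last equality uses Lemma~\ref{lemma:disintegration} together with the defining formula $W(k) = \int_{D \times \mathbf G ( n , m )} k \, \mathrm d V$ to deduce $W^{(x)} = V^{(x)}$ for $\| V \|$~almost every~$x \in D$.

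Lemma~\ref{lemma:lip_approx}\,\eqref{item:lip_approx:convergence}, which is applicable thanks to the uniform bound~$\Lip(f_\varepsilon) \leq \Lip(f)$, then yields $V \, \mathbf D f_\varepsilon \to V \, \mathbf D f$ and $W \, \mathbf D f_\varepsilon \to W \, \mathbf D f$ weakly against $\mathbf L_1$ test fields; testing with arbitrary $G \in \mathbf L_1 \big ( \| V \| , \Hom ( \mathbf R^n , \mathbf R )^\ast \big )$ supported in $K \cap D$ and invoking the pointwise equality above gives $V \, \mathbf D f = W \, \mathbf D f$ $\| V \|$~almost everywhere on~$K \cap D$. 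Because $\phi \equiv 1$ near~$K$, the approximate-limit characterisation in Definition~\ref{definition:genWeakDiffFct} (locality of the derivative) lets one drop~$\phi$, so $V\,\mathbf D g = V \, \mathbf D f$ and $W\,\mathbf D c = W \, \mathbf D \tilde c = W \, \mathbf D f$ on the interior of $K \cap D$; exhausting~$D$ by such~$K$ concludes. The chief obstacle lies in this final passage: Lemma~\ref{lemma:lip_approx}\,\eqref{item:lip_approx:convergence} demands a \emph{global} bound on $\| V \|_{(\infty)}(V \, \mathbf D f_i)$, which forces the cutoff~$\phi$, and the removal of~$\phi$ relies on the pointwise locality of the generalised weak derivative.
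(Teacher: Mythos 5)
Your argument follows the paper's proof essentially step for step: extend $c$ Lipschitz-continuously, mollify, identify $V^{(x)}=W^{(x)}$ on~$D$, invoke \ref{example:c1} for the smooth approximants, and pass to the limit via \ref{lemma:lip_approx}\,\eqref{item:lip_approx:convergence}. The only cosmetic difference is that the paper begins with ``Assuming $\kappa=\Lip c<\infty$'' (leaving the reduction to the globally Lipschitzian case implicit) and then extends $c$ to all of $\mathbf R^n$ by \cite[2.10.43]{MR41:1976}, whereas you keep the local hypothesis and handle it explicitly with a cutoff and the locality of the generalised weak derivative---a detail the paper suppresses but which your write-up correctly supplies.
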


\begin{proof}
	Assuming $\kappa = \Lip c < \infty$, we extend $c$ to $\zeta : \mathbf
	R^n \to \mathbf R$ such that $\Lip \zeta = \kappa$ by means of
	\cite[2.10.43]{MR41:1976}.  Then, using convolution, we construct a
	sequence $\zeta_i \in \mathscr E( \mathbf R^n, \mathbf R )$ with $\Lip
	\zeta_i \leq \kappa$ for every positive integer~$i$ and
	\begin{equation*}
		\text{$\zeta_i (x) \to \zeta(x)$, uniformly for $x \in \mathbf
		R^n$, as $i \to \infty$}.
	\end{equation*}
	Since $W^{(x)} = V^{(x)}$ for $\| V \|$~almost all~$x \in D$ by
	\cite[2.8.9,\,18,\,9.11]{MR41:1976}, we note
	\begin{equation*}
		W \, \mathbf D(\zeta_i|\mathbf R^n \without B ) (x) = V \,
		\mathbf D ( \zeta_i|U ) (x) \quad \text{for $\| V \|$~almost
		all~$x \in D$}
	\end{equation*}
	for every positive integer~$i$ by \ref{example:c1}.  Therefore,
	passing to the limit $i \to \infty$ with the help of
	\ref{lemma:lip_approx}, we deduce the conclusion.
\end{proof}

\begin{remark} \label{remark:TGV_multiplication}
	The result of \cite[9.16]{MR3528825} remains valid if the references
	to ``Definition~9.1'' in its statement and its proof are replaced by
	references to the present, more general definition in
	\ref{definition:TGV}; in fact, taking \ref{remark:generalise_sect_8}
	and \ref{remark:generalise_sect_9} into account, it is sufficient to
	additionally replace the occurrences of ``$\mathbf{RV}_m$'' on page
	1044, lines 15 and 29 in \cite{MR3528825} by ``$\mathbf V_m$'' and the
	words ``Example~8.7 in conjunction with \cite[2.10.19\,(4),
	2.10.43]{MR41:1976}'' on page 1044, lines 26--27 in \cite{MR3528825}
	by a reference to \ref{lemma:local_der} in the present paper.
\end{remark}

\section{Sobolev inequalities} \label{sec:sobolev}

In this section, we present Sobolev inequalities for generalised weakly
differentiable functions with zero boundary values, that are entailed by the
general isoperimetric inequalities in~\ref{theorem:improvedIsopIneq}
and~\ref{theorem:poincareTypeIneq}.  As the formal analogue
to~\ref{theorem:improvedIsopIneq} does not hold
(see~\ref{example:no_formal_analogue}), two alternative formulations are
offered.  The first version (see~\ref{thm:sob_average}) involves an averaging
process based on medians and a scale (possibly depending on the point).  The
second version (see~\ref{thm:sob_rect}) implies control only on the
rectifiable part.  For both statements, we isolate a classical technique due
to Federer in~\ref{lemma:federers_fct} and~\ref{remark:federers_fct}.  The
analogue for \ref{theorem:poincareTypeIneq}, in contrast, is immediate
(see~\ref{thm:poincareTypeIneq}).  Finally, the negative results of this
section (see~\ref{example:no_formal_analogue} and~\ref{remark:optimality_sob})
are entailed by examples (see~\ref{example:lebesgue}
and~\ref{example:bunch_of_planes}) based on known scaling properties of
derivatives in Euclidean space.

\begin{example} \label{example:lebesgue}
	Suppose $n$ is an integer, $n \geq 2$, and $n/(n-1) < p \leq \infty$.
	Then,
	\begin{equation*}
		{\textstyle \sup \left \{ ( \mathscr L^n )_{(p)} (f) \with
		\text{$0 \leq f \in \mathscr D ( \mathbf R^n, \mathbf R )$,
		$\spt f \subset \mathbf U(0,1)$, $\int | \mathrm D f | \,
		\mathrm d \mathscr L^n \leq 1$} \right \} = \infty};
	\end{equation*}
	in fact, we fix $0 \leq g \in \mathscr D ( \mathbf R^n, \mathbf R)$
	with $\spt g \subset \mathbf U(0,1)$ and $\int | \mathrm Dg | \,
	\mathrm d \mathscr L^n = 1$, and consider $f_\varepsilon \in \mathscr
	D ( \mathbf R^n, \mathbf R )$ with $f_\varepsilon (x) =
	\varepsilon^{1-n} g( \varepsilon^{-1} x )$ for $x \in \mathbf R^n$ and
	$0 < \varepsilon \leq 1$.
\end{example}

\begin{example} \label{example:bunch_of_planes}
	Suppose $m$ and $n$ are positive integers, $m < n$, and $\Phi$ is the
	set of $(V,f)$ such that $V \in \mathbf {RV}_m ( \mathbf R^n )$, $\| V
	\| \, \mathbf U (0,1) = \boldsymbol \alpha (n)$, $\delta V = 0$, $0
	\leq f \in \mathscr D ( \mathbf R^n, \mathbf R )$, $\spt f \subset
	\mathbf U (0,1)$, and $\int | V \, \mathbf Df | \, \mathrm d \| V \|
	\leq 1$.  Then, we will prove that
	\begin{equation*}
		\sup \left \{ \| V \|_{(p)} (f) \with (V,f) \in \Phi \right \}
		= \infty \quad \text{for $n/(n-1) < p \leq \infty$}.
	\end{equation*}
	We pick $T \in \mathbf G(n,m)$, let $W = \mathscr L^n \times
	\boldsymbol \delta_T \in \mathbf V_m ( \mathbf R^n )$, and recall
	$\delta W = 0$
	from~\ref{example:decomposition}\,\eqref{item:decomposition:no}; in
	particular, the assertion resulting from replacing ``$\mathbf{RV}_m$''
	by ``$\mathbf V_m$'' is a consequence of \ref{example:c1} and
	\ref{example:lebesgue}.  Finally, we approximate $W$ by varifolds~$V
	\in \mathbf{RV}_m ( \mathbf R^n )$ with $\| V \| \, \mathbf U (0,1) =
	\boldsymbol \alpha (n)$, $\delta V = 0$, and $V^{(x)} = \boldsymbol
	\delta_T$ for $x \in \mathbf R^n$.  (Geometrically, each approximating
	varifold $V$ corresponds to a positive multiple of the union of a
	countable collection of affine planes parallel to~$T$.)
\end{example}

\begin{example} [Sobolev inequality vs.~general isoperimetric inequality]
	\label{example:no_formal_analogue}
	\emph{If $m$ and~$n$ are positive integers, $m < n$, $\beta
	= \infty$ if $m=1$, $\beta = m/(m-1)$ if $m>1$, and $0 < d < \infty$, then the supremum
	of the set of all numbers
	\begin{equation*}
		( \| V \| \restrict \{ x \with M(x) \geq d \}
		)_{(\beta)} (f)
	\end{equation*}
	corresponding to $V \in \mathbf{RV}_m ( \mathbf R^n )$ and $f \in
	\mathscr D ( \mathbf R^n, \mathbf R )$
	satisfying $\delta V = 0$, $f \geq 0$, and $\int | V \, \mathbf Df| \,
	\mathrm d \| V \| \leq 1$, where $M$ is associated to $m$, $n$, and
	$V$ as in \ref{miniremark:defMaxFunction}, equals~$\infty$}; in fact,
	assuming $d = \boldsymbol \alpha (m)^{-1} \boldsymbol \alpha (n)$, one may
	take $p = \beta$ in~\ref{example:bunch_of_planes}.
\end{example}
 
\begin{lemma} [Integrating superlevel sets] \label{lemma:federers_fct}
	Suppose $\phi$ measures $X$, $f$ is a nonnegative $\phi$ measurable
	function, $1 \leq p \leq \infty$, and $E (y) = \{ x \with f(x) > y \}$
	for $0 \leq y < \infty$.  Then,
	\begin{equation*}
		{\textstyle \phi_{(p)} (f) \leq \int_0^\infty \phi (
		E(y))^{1/p} \, \mathrm d \mathscr L^1 \, y};
	\end{equation*}
	here $0^{1/p} = 0$ and $\infty^{1/p} = \infty$.
\end{lemma}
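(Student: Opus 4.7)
The plan is to reduce the claim to Minkowski's inequality for integrals (see for instance \cite[2.4.18]{MR41:1976}) applied to the layer-cake representation
\begin{equation*}
	{\textstyle f(x) = \int_0^\infty \chi_{E(y)}(x) \, \mathrm d \mathscr L^1 \, y \quad \text{for } x \in X,}
\end{equation*}
valid because $f$ is nonnegative.

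First I would dispose of the case $p = \infty$ separately. In this case $1/p = 0$, and since $y \mapsto \phi ( E(y) )$ is nonincreasing, the integrand $\phi ( E(y) )^{0}$ (which, in view of the stated conventions and the convention $t^0 = 1$ for $0 < t < \infty$, equals the characteristic function of $\{ y \with \phi ( E(y) ) > 0 \}$, with value $\infty$ if $\phi ( E(y) ) = \infty$) yields
\begin{equation*}
	{\textstyle \int_0^\infty \phi ( E(y) )^{1/p} \, \mathrm d \mathscr L^1 \, y = \mathscr L^1 \{ y \with \phi ( E(y) ) > 0 \} = \phi_{(\infty)} (f),}
\end{equation*}
where the last equality is immediate from the definition of $\phi_{(\infty)}$ in Section~\ref{sec:notation} and the monotonicity of $y \mapsto \phi ( E(y) )$; the conclusion follows with equality.

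For the case $1 \leq p < \infty$, I would verify the joint $\phi \times \mathscr L^1$ measurability of $(x,y) \mapsto \chi_{E(y)}(x)$ using Fubini-type reasoning on the superlevel set $\{ (x,y) \with f(x) > y \}$, and then invoke Minkowski's inequality for integrals with respect to the seminorm $\phi_{(p)}$. This gives
\begin{equation*}
	{\textstyle \phi_{(p)} (f) = \phi_{(p)} \bigl ( \int_0^\infty \chi_{E(y)} \, \mathrm d \mathscr L^1 \, y \bigr ) \leq \int_0^\infty \phi_{(p)} ( \chi_{E(y)} ) \, \mathrm d \mathscr L^1 \, y = \int_0^\infty \phi ( E(y) )^{1/p} \, \mathrm d \mathscr L^1 \, y,}
\end{equation*}
which is the desired inequality. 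If the right-hand side is $\infty$ there is nothing to show, so I may assume it is finite and apply Minkowski's integral inequality with the roles of the measures arranged so that $\phi_{(p)}$ of the $\mathscr L^1$-integral is bounded by the $\mathscr L^1$-integral of $\phi_{(p)}$.

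The only real obstacle is bookkeeping around measurability and the boundary conventions $0^{1/p}=0$, $\infty^{1/p}=\infty$; both are routine once the $p = \infty$ case is separated out, so the proof is essentially an appeal to Minkowski's inequality for integrals.
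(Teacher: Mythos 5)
Your proof is correct, but it takes a genuinely different route from the paper's. You write $f$ via the layer-cake formula $f(x)=\int_0^\infty\chi_{E(y)}(x)\,\mathrm d\mathscr L^1\,y$ and then apply the \emph{integral} form of Minkowski's inequality in one stroke, treating $p=\infty$ by a separate direct computation. The paper instead proves what is in effect this special case of the integral Minkowski inequality from scratch: it sets $g(y)=\phi_{(p)}(\inf\{f,y\})$, uses only the \emph{two-function} (triangle) form of Minkowski's inequality \cite[2.4.15]{MR41:1976} to show $g$ is Lipschitz with $0\leq g'(y)\leq\phi(E(y))^{1/p}$ a.e., and then integrates $g'$ using absolute continuity of Lipschitz functions \cite[2.9.19, 2.9.20]{MR41:1976}. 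Your argument is shorter and more conceptual but leans on a heavier tool (continuous Minkowski over a possibly non-$\sigma$-finite $\phi$), while the paper's is more elementary and self-contained, needing only the pointwise seminorm triangle inequality together with the differentiation theory it already uses elsewhere. Both routes require the same preliminary reduction to the case where the right-hand side is finite (and, implicitly, to $\phi(E(\varepsilon))<\infty$ for $\varepsilon>0$ so that everything in sight is effectively $\sigma$-finite); you acknowledge this but do not carry out the $\varepsilon$-truncation that the paper spells out. That is a minor omission, not a gap.
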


\begin{proof}
	Assume $p < \infty$ and $\int_0^\infty \phi (E(y))^{1/p} \, \mathrm d
	\mathscr L^1 \, y < \infty$.  Then, possibly replacing $f(x)$ by $\sup
	\{ 0, f(x)-\varepsilon\}$ for $0 < \varepsilon < \infty$, we may also
	assume $\phi (E(0)) < \infty$.  Abbreviating $f_y = \inf \{ f, y \}$,
	we define $g : \{ y \with 0 \leq y < \infty \} \to \mathbf R$ by
	\begin{equation*}
		g(y) = \phi_{(p)} (f_y) \quad \text{for $0 \leq y < \infty$}.
	\end{equation*}
	Minkowski's inequality (see \cite[2.4.15]{MR41:1976}) yields
	\begin{equation*}
		0 \leq g ( y + \upsilon ) - g ( y ) \leq \phi_{(p)} (
		f_{y+\upsilon} - f_y ) \leq \upsilon \phi ( E(y))^{1/p}
	\end{equation*}
	for $0 \leq y < \infty$ and $0 \leq \upsilon < \infty$.  Therefore,
	$\Lip g < \infty$ and, by \cite[2.9.19]{MR41:1976},
	\begin{equation*}
		0 \leq g'(y) \leq \phi (E(y))^{1/p} \quad \text{for $\mathscr
		L^1$~almost all $0 \leq y < \infty$},
	\end{equation*}
	hence, by \cite[2.4.7,\,9.20]{MR41:1976}, we infer $\phi_{(p)} (f) =
	\lim_{y \to \infty} g(y) = \int_0^\infty g' \, \mathrm d \mathscr
	L^1$.
\end{proof}

\begin{remark} \label{remark:federers_fct}
	The method of the preceding proof is taken from
	\cite[4.5.9\,(18)]{MR41:1976}.
\end{remark}

\begin{theorem} [Sobolev inequality -- with averaging] \label{thm:sob_average}
	Suppose $m$ and $n$ are positive integers, $m \leq n$, $U$ is an open
	subset of $\mathbf R^n$, $V \in \mathbf V_m ( U )$, $\| \delta V \|$
	is a Radon measure, $f \in \mathbf T_{\Bdry U} ( V )$, $E(y) = \{ x
	\with f(x) > y \}$ for $y \in \mathbf R$,  $\| V \| ( E(y) ) < \infty$
	for $0 < y < \infty$,
	\begin{equation*}
		\text{$\beta = \infty$ if $m = 1$}, \quad \text{$\beta =
		m/(m-1)$ if $m>1$},
	\end{equation*}
	$0 < d < \infty$, $r$ is a $\{ t \with 0 < t < \infty \}$~valued $\| V
	\|$~measurable function, $\dmn r \subset U$, $0 < \lambda < 1$, $g :
	\dmn r \to \overline{\mathbf R}$ satisfies
	\begin{equation*}
		g(a) = \sup \big \{ y \with \| V \| ( U \cap \mathbf B
		(a,r(a)) \without E(y) ) \leq \lambda \| V \| ( U \cap \mathbf
		B(a,r(a)) ) \big \}
	\end{equation*}
	for $a \in \dmn r$, and $A = \{ a \with \infty > \| V \| ( U \cap
	\mathbf B (a,r(a))) \geq d \boldsymbol \alpha (m) r(a)^m \}$.

	Then, $g$ is $\| V \|$~measurable and there holds
	\begin{equation*}
		{\textstyle ( \| V \| \restrict A )_{(\beta)} (g) \leq \Gamma
		\big ( \int f \, \mathrm d \| \delta V \| + \int | V \,
		\mathbf Df | \, \mathrm d \| V \| \big )},
	\end{equation*}
	where $\Gamma = (1-\lambda)^{-1} \boldsymbol \beta (n)^{1-1/m}
	\boldsymbol \gamma (m) d^{-1/m}$.
\end{theorem}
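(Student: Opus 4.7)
The plan is to bound $\|V\|(A \cap \{g > y\})$ for each $y > 0$ by applying the general isoperimetric inequality \ref{definition:gamma} to the truncated varifold $W_y$ from \ref{remark:wy}, and then to integrate in $y$ by means of \ref{lemma:federers_fct} together with the coarea formula for generalised weakly differentiable functions. Before starting, the $\|V\|$~measurability of $g$ is recorded as a preliminary; it follows, via monotonicity in $y$ of the defining condition, from the $\|V\|$~measurability of $r$ and of the functions $a \mapsto \|V\|(U \cap \mathbf B(a, r(a)))$ and $a \mapsto \|V\|(U \cap \mathbf B(a, r(a)) \cap E(y))$ for $y \in \mathbf R$.

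Fix $0 < y < \infty$ for which $\|\delta W_y\|$ is a Radon measure, which holds for $\mathscr L^1$~almost every such $y$ by \ref{remark:wy}. For any $a \in A \cap \{g > y\}$, the definitions of $g$ and $A$ give
\begin{equation*}
\|V\|(E(y) \cap \mathbf B(a,r(a))) \geq (1-\lambda)\|V\|(U \cap \mathbf B(a,r(a))) \geq (1-\lambda)d\boldsymbol\alpha(m)r(a)^m;
\end{equation*}
so, writing $M_y$ for the maximal-type function of \ref{miniremark:defMaxFunction} attached to $W_y$ and $S_y = \{x \with M_y(x) \geq (1-\lambda)d\}$, one has $\mathbf B(a,r(a)) \subset S_y$ and $\|V\|(\mathbf B(a, r(a))) \leq (1-\lambda)^{-1} \|W_y\|(\mathbf B(a, r(a)))$. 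Since the radii $r(a)$ are uniformly bounded on $A \cap \{g > y\}$ by $(\|V\|(E(y))/((1-\lambda)d\boldsymbol\alpha(m)))^{1/m}$, Besicovitch's covering theorem extracts $\boldsymbol\beta(n)$ disjointed subfamilies of $\{\mathbf B(a,r(a)) \with a \in A \cap \{g > y\}\}$ whose union covers $A \cap \{g > y\}$. Summing $\|V\|$ over each disjointed subfamily and then across the $\boldsymbol\beta(n)$ subfamilies yields $\|V\|(A \cap \{g > y\}) \leq (1-\lambda)^{-1}\boldsymbol\beta(n)\|W_y\|(S_y)$. Applying \ref{definition:gamma} to the finite-mass varifold $W_y \in \mathbf V_m(\mathbf R^n)$ (noting $B = \emptyset$ since $G = \Bdry U$) then produces $\|V\|(A \cap \{g > y\})^{1-1/m} \leq \Gamma \|\delta W_y\|(\mathbf R^n)$ after raising to the power $1-1/m$ and collecting constants (the case $m = 1$ is handled through $0^0 = 0$ and $\boldsymbol\gamma(1) = 2^{-1}$).

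To finish, \ref{lemma:federers_fct} with $\phi = \|V\|\restrict A$, $f = g$ and $p = \beta$ gives
\begin{equation*}
(\|V\|\restrict A)_{(\beta)}(g) \leq {\textstyle \int_0^\infty \|V\|(A \cap \{g > y\})^{1-1/m}\,\mathrm d \mathscr L^1 y \leq \Gamma \int_0^\infty \|\delta W_y\|(\mathbf R^n)\,\mathrm d \mathscr L^1 y}.
\end{equation*}
Using $\|\delta W_y\|(\mathbf R^n) \leq \|\delta V\|(E(y)) + \|V\partial E(y)\|(U)$ from \ref{remark:wy}, the first summand integrates to $\int f\,\mathrm d\|\delta V\|$ by the layer-cake formula, while the second equals $\int |V\mathbf Df|\,\mathrm d\|V\|$ by the coarea formula for generalised weakly differentiable functions, extended to general varifolds in \ref{remark:generalise_sect_8} (see \cite[8.5,\,30]{MR3528825}).

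The key technical subtlety lies in the covering step: one must ensure that the Besicovitch constant $\boldsymbol\beta(n)$ enters the final constant $\Gamma$ only as $\boldsymbol\beta(n)^{1-1/m}$, which is achieved by performing the Besicovitch reduction at the level of $\|V\|$ \emph{before} raising to the power $1-1/m$ through the isoperimetric inequality. The second, conceptually more important, input is the coarea identity for generalised weakly differentiable functions on general (possibly non-rectifiable) varifolds, for which one relies on \ref{remark:generalise_sect_8}.
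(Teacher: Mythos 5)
Your proposal is correct and follows essentially the same route as the paper: establish $\|V\|$-measurability of $g$, observe that $g(a)>y$ and $a \in A$ place $\mathbf B(a,r(a))$ inside $\{M_y \geq (1-\lambda)d\}$ with $\|V\|(U\cap\mathbf B(a,r(a)))\leq (1-\lambda)^{-1}\|W_y\|\,\mathbf B(a,r(a))$, apply Besicovitch before raising to the power $1-1/m$ via the isoperimetric inequality \ref{definition:gamma}, and integrate in $y$ using \ref{lemma:federers_fct} together with the coarea formula (from \ref{remark:generalise_sect_8}, \cite[8.5,\,30]{MR3528825}). The only cosmetic difference is that you derive an a priori uniform bound on $r(a)$ over $A\cap\{g>y\}$ to justify the covering theorem, whereas the paper truncates at $r(a)\leq i$ and passes to the limit; both are valid.
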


\begin{proof}
	Firstly, we use the facts, that the supremum equalling $g(a)$
	remains unchanged when $y$ therein is restricted to be rational and
	that
	\begin{equation*}
		( U \times U ) \cap \{ (a,x) \with |a-x| \leq r(a) \}
	\end{equation*}
	is $\| V \| \times \| V \|$~measurable, to deduce the $\| V
	\|$~measurability of $g$ from Fubini's theorem (see
	\cite[2.6.2]{MR41:1976}).  Next, we define $W_y \in \mathbf V_m (
	\mathbf R^n )$ as in \ref{remark:wy} and let $M_y$ denote the function
	resulting from replacement of $V$ by $W_y$ in the definition of the
	function $M$ in \ref{miniremark:defMaxFunction}.  Whenever $0 < y <
	\infty$, $a \in A$, and $g(a) >y$, we note
	\begin{gather*}
		\| V \| ( U \cap \mathbf B (a,r(a)) ) \leq \| W_y \| \,
		\mathbf B (a,r(a)) + \lambda \| V \| ( U \cap \mathbf B
		(a,r(a))) < \infty, \\
		\|V\| ( U \cap \mathbf{B}(a,r(a)) ) \leq (1-\lambda)^{-1}
		\|W_y\| \, \mathbf{B}(a,r(a)), \\
		\mathbf{B}(a,r(a)) \subset \{x \with M_y (x) \geq (1-\lambda)
		d \}
	\end{gather*}
	by \ref{miniremark:defMaxFunction}.  Therefore, the
	Be\-si\-co\-vich-Federer covering theorem yields
	\begin{align*}
		\| V \| ( A \cap \{ a \with g(a)>y \} ) & = \lim_{i \to
		\infty} \| V \| ( A \cap \{ a \with \text{$g(a)>y$ and
		$r(a)\leq i$} \} ) \\
		& \leq \boldsymbol \beta (n) (1-\lambda)^{-1} \| W_y \| \, \{
		x \with M_y (x) \geq (1-\lambda) d \}
	\end{align*}
	for $0 < y < \infty$, whence we infer, as $\| W_y \| ( \mathbf R^n ) <
	\infty$, that
	\begin{equation*}
		\| V \| ( A \cap \{ a \with g(a)>y \} )^{1/\beta} \leq \Gamma
		\| \delta W_y \| ( \mathbf R^n ) \leq \Gamma \big ( \| \delta
		V \| (E(y)) + \| V \, \partial E(y) \| ( U ) \big  )
	\end{equation*}
	for $\mathscr L^1$ almost all $0 < y < \infty$ by
	\ref{theorem:improvedIsopIneq}, \ref{definition:gamma}, and
	\ref{remark:wy}; here $0^0=0$.  Since $g$ is nonnegative, integrating
	this inequality with respect to $\mathscr L^1$ yields the conclusion
	by means of \ref{lemma:federers_fct}, \cite[2.6.2]{MR41:1976},
	\ref{remark:generalise_sect_8}, and \cite[8.5,\,30]{MR3528825}.
\end{proof}

\begin{remark} \label{remark:optimality_sob}
	If $m < n$, one may not replace $g$ by $f$ in the preceding estimate;
	in fact, in view of \ref{example:c1}, \ref{remark:generalise_sect_9},
	and \cite[9.4]{MR3528825}, one may consider $U = \mathbf
	R^n \cap \mathbf U(0,1)$, $d = 2^{-m} \boldsymbol \alpha (m)^{-1}
	\boldsymbol \alpha (n)$, and $r(a)=2$ for $a \in U$, and take $p =
	\infty$ if $m =1$ and $p = m/(m-1)$ if $m>1$ in
	\ref{example:bunch_of_planes}.
\end{remark}

\begin{theorem} [Sobolev inequality -- rectifiable part] \label{thm:sob_rect}
	Suppose $m$ and $n$ are positive integers, $m \leq n$, $U$ is an open
	subset of $\mathbf R^n$, $V \in \mathbf V_m ( U )$, $\| \delta V \|$
	is a Radon measure, $f \in \mathbf T_{\Bdry U} ( V )$, $\| V \| \, \{
	x \with f (x) > y \} < \infty$ for $0 < y < \infty$,
	\begin{equation*}
		\text{$\beta = \infty$ if $m = 1$}, \quad \text{$\beta =
		m/(m-1)$ if $m>1$},
	\end{equation*}
	$0 < d < \infty$, and $A = \{ a \with \boldsymbol \Theta^m ( \| V \|,
	a ) \geq d \}$.

	Then, there holds
	\begin{equation*}
		{\textstyle ( \| V \| \restrict A )_{(\beta)} (f) \leq \Gamma
		\big ( \int f \, \mathrm d \| \delta V \| + \int | V \,
		\mathbf Df | \, \mathrm d \| V \| \big )},
	\end{equation*}
	where $\Gamma = \boldsymbol \gamma (m) d^{-1/m}$.
\end{theorem}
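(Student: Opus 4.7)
The plan is to mirror the template of Theorem~\ref{thm:sob_average}, but to exploit the density $\boldsymbol\Theta^m( \| V \|, \cdot )$ directly instead of a maximal function averaged over an auxiliary scale~$r$.  Writing $E(y) = \{ x \with f(x) > y \}$ for $0 < y < \infty$, Lemma~\ref{lemma:federers_fct} yields
\begin{equation*}
	( \| V \| \restrict A )_{(\beta)}(f) \leq {\textstyle \int_0^\infty \| V \|( A \cap E(y) )^{1/\beta} \, \mathrm d \mathscr L^1\, y},
\end{equation*}
so it suffices to control $\| V \|( A \cap E(y) )$ for $\mathscr L^1$~almost every $0 < y < \infty$.

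For such $y$, since the choice $G = \Bdry U$ forces $B = \emptyset$, I would introduce $W_y \in \mathbf V_m( \mathbf R^n )$ as in Remark~\ref{remark:wy}, noting that $\| W_y \|( \mathbf R^n ) = \| V \|( E(y) ) < \infty$ and that $\| \delta W_y \|$ is a Radon measure with
\begin{equation*}
	\| \delta W_y \|( \mathbf R^n ) \leq \| \delta V \|( E(y) ) + \| V \, \partial E(y) \|( U ).
\end{equation*}
Let $M_y$ denote the maximal-type function of $W_y$ in the sense of~\ref{miniremark:defMaxFunction}.  The crucial assertion is the containment $A \cap E(y) \subset \{ x \with M_y(x) \geq d \}$ up to a $\| V \|$~null set.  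Indeed, by the Lebesgue density theorem for Radon measures on~$\mathbf R^n$, for $\| V \|$~almost every $a \in E(y)$ one has $\| V \|( \mathbf B(a,r) \without E(y) ) / \| V \|( \mathbf B(a,r) ) \to 0$ as $r \to 0+$; combining this with $\boldsymbol\Theta^m( \| V \|, a ) \geq d$ for $a \in A$ along a sequence $r_k \to 0+$ realising the upper density yields $\boldsymbol\Theta^m( \| W_y \|, a ) \geq d$, hence $M_y(a) \geq d$.  Since $\| W_y \|( A \cap E(y) ) = \| V \|( A \cap E(y) )$, Definition~\ref{definition:gamma} applied to~$W_y$ then gives
\begin{equation*}
	\| V \|( A \cap E(y) )^{1-1/m} \leq \| W_y \|( \{ x \with M_y(x) \geq d \} )^{1-1/m} \leq \boldsymbol \gamma (m) d^{-1/m} \| \delta W_y \|( \mathbf R^n )
\end{equation*}
(using the convention $0^0 = 0$ when $m = 1$).

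Integrating in $y$ and invoking Fubini's theorem (see~\cite[2.6.2]{MR41:1976}) together with the coarea-type formula for generalised weakly differentiable functions in~\cite[8.5,\,30]{MR3528825}, which is valid in the present setting by Remark~\ref{remark:generalise_sect_8}, converts $\int_0^\infty \| \delta W_y \|( \mathbf R^n ) \, \mathrm d \mathscr L^1\, y$ into $\int f \, \mathrm d \| \delta V \| + \int | V \, \mathbf Df | \, \mathrm d \| V \|$, completing the argument.

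The principal obstacle is the density transfer in the second paragraph: it is the only step not already contained in the proof of Theorem~\ref{thm:sob_average}, and it relies on combining the upper-density bound defining $A$ with the Lebesgue density of $E(y)$ at~$a$, bypassing any use of a covering theorem.  The case $m = 1$ is handled uniformly: the isoperimetric inequality then reads as the implication ``$\| V \|( A \cap E(y) ) > 0$ forces $1 \leq \boldsymbol \gamma (1) d^{-1} \| \delta W_y \|( \mathbf R^n )$'', which, combined with Lemma~\ref{lemma:federers_fct} at $p = \infty$, yields the claimed $\mathbf L_\infty$ bound.
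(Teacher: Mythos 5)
Your proposal is correct and follows essentially the same route as the paper's own proof: both introduce the truncated varifold $W_y$ from Remark~\ref{remark:wy}, transfer the density hypothesis from $\| V \|$ to $\| W_y \|$ on $A \cap E(y)$ via the Lebesgue differentiation theorem (the paper cites \cite[2.8.9,\,18,\,9.11]{MR41:1976}, which is the reference for the density argument you spell out), apply Definition~\ref{definition:gamma} to each $W_y$, and integrate in~$y$ using Lemma~\ref{lemma:federers_fct}, Fubini, and the coarea formula \cite[8.5,\,30]{MR3528825} as upgraded by Remark~\ref{remark:generalise_sect_8}. The only cosmetic difference is that you make the Lebesgue-density/upper-density combination explicit where the paper simply references Federer, and you unpack the $m=1$ case that the paper absorbs into the convention $0^0=0$.
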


\begin{proof}
	We define $E(y)$ as in \ref{definition:TGV}.  Moreover, we define $W_y
	\in \mathbf V_m ( \mathbf R^n )$ as in \ref{remark:wy} and let $M_y$
	denote the function resulting from replacement of $V$ by $W_y$ in the
	definition of the function $M$ in \ref{miniremark:defMaxFunction}.
	Since $\boldsymbol \Theta^m ( \| W_y \|, x ) \geq d$ for $\| V
	\|$~almost all~$x \in A \cap E(y)$ by
	\cite[2.8.9,\,18,\,9.11]{MR41:1976}, we conclude
	\begin{equation*}
		\| V \| ( A \cap E (y) ) \leq \| W_y \| \, \{ x \with M_y (x)
		\geq d \} \quad \text{for $0 < y < \infty$}.
	\end{equation*}
	In conjunction with \ref{theorem:improvedIsopIneq},
	\ref{definition:gamma}, and \ref{remark:wy}, we infer, as $\| W_y \| (
	\mathbf R^n ) < \infty$, that
	\begin{equation*}
		\| V \| ( A \cap E(y) )^{1/\beta} \leq \Gamma \| \delta W_y \|
		( \mathbf R^n ) \leq \Gamma \big ( \| \delta V \| (E(y)) + \|
		V \, \partial E(y) \| ( U ) \big  )
	\end{equation*}
	for $\mathscr L^1$ almost all $0 < y < \infty$; here $0^0=0$.
	Integrating this inequality yields the conclusion by means of
	\ref{lemma:federers_fct}, \cite[2.6.2]{MR41:1976},
	\ref{remark:generalise_sect_8}, and \cite[8.5,\,30]{MR3528825}.
\end{proof}

\begin{theorem} [Poincar{\'e} inequality in a ball -- zero boundary values] \label{thm:poincareTypeIneq}
	Suppose $m$ and $n$ are positive integers, $m \leq n$, $a \in \mathbf
	R^n$, $0 < r < \infty$, $V \in \mathbf V_m ( \mathbf U (a,r) )$, $\|
	\delta V \|$ is a Radon measure, and $f \in \mathbf T_{\Bdry \mathbf U
	(a,r)} ( V )$.

	Then, there holds
	\begin{equation*}
		{\textstyle \boldsymbol{\alpha} (m)^{-1/m} r^{-1} \int f
		\,\mathrm d \|V\| \leq \boldsymbol{\gamma}(m) \left(
		\textstyle\int f\,\mathrm d \| \delta V \| + \int
		| V \,\mathbf{D}f| \, \mathrm d \| V \| \right)}.
	\end{equation*}
\end{theorem}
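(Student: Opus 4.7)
The plan is to apply Corollary \ref{theorem:poincareTypeIneq} fibre-wise to the restriction of $V$ to the super-level sets of $f$ and then integrate, paralleling the strategy employed in the proofs of Theorems \ref{thm:sob_rect} and \ref{thm:sob_average}.

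First, since $f\in\mathbf{T}_{\Bdry\mathbf{U}(a,r)}(V)$ forces $f\geq 0$, set $E(y)=\{x\with f(x)>y\}$ for $0<y<\infty$ and, with $G=\Bdry\mathbf{U}(a,r)$ so that $B=\emptyset$, introduce the varifolds $W_y\in\mathbf{V}_m(\mathbf{R}^n)$ of Remark \ref{remark:wy}. For $\mathscr{L}^1$-almost every $y>0$ the function $y$ satisfies the conditions of Definition \ref{definition:TGV}, so by Remark \ref{remark:wy} the measure $\|\delta W_y\|$ is Radon with
$$\|\delta W_y\|(\mathbf{R}^n)\leq\|\delta V\|(E(y))+\|V\,\partial E(y)\|(\mathbf{U}(a,r)).$$
Because $\spt\|W_y\|\subset\spt\|V\|\subset\mathbf{B}(a,r)$ and $\|W_y\|(\mathbf{R}^n)=\|V\|(E(y))<\infty$ (the finiteness supplied by the hypothesis of Definition \ref{definition:TGV} applied with the compact set $K=\mathbf{B}(a,r)$), Corollary \ref{theorem:poincareTypeIneq} yields
$$\boldsymbol{\alpha}(m)^{-1/m}r^{-1}\|V\|(E(y))\leq\boldsymbol{\gamma}(m)\bigl(\|\delta V\|(E(y))+\|V\,\partial E(y)\|(\mathbf{U}(a,r))\bigr)$$
for $\mathscr{L}^1$-almost every $0<y<\infty$.

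Next, I would integrate this pointwise inequality over $(0,\infty)$ with respect to $\mathscr{L}^1$. Fubini's theorem (see \cite[2.6.2]{MR41:1976}) supplies
$$\tint0\infty\|V\|(E(y))\,\ud\mathscr{L}^1\,y=\int f\,\ud\|V\|,\qquad\tint0\infty\|\delta V\|(E(y))\,\ud\mathscr{L}^1\,y=\int f\,\ud\|\delta V\|,$$
while the coarea identity for generalised weakly differentiable functions, available in the present setting via Remark \ref{remark:generalise_sect_8} together with \cite[8.5,\,30]{MR3528825}, gives
$$\tint0\infty\|V\,\partial E(y)\|(\mathbf{U}(a,r))\,\ud\mathscr{L}^1\,y=\int|V\,\mathbf{D}f|\,\ud\|V\|.$$
Combining these three identities with the integrated form of the pointwise estimate immediately produces the stated inequality.

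The only step that is genuinely non-trivial is the last one: it is precisely the coarea identity that dictates the form of the zero-boundary-value condition in Definition \ref{definition:TGV}, and its transfer from the rectifiable setting of \cite{MR3528825} to the general varifold setting here is licensed by Remark \ref{remark:generalise_sect_9}. Everything else (the layer-cake representation, the $\mathscr{L}^1$-measurability of $y\mapsto\|V\|(E(y))$ and of the other terms, and the containment $\spt\|W_y\|\subset\mathbf{B}(a,r)$ needed to invoke Corollary \ref{theorem:poincareTypeIneq}) is routine bookkeeping.
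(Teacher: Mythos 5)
Your proposal is correct and takes essentially the same approach as the paper: both apply Corollary~\ref{theorem:poincareTypeIneq} fibrewise to the varifolds $W_y$ from~\ref{remark:wy}, noting $\spt\|W_y\|\subset\mathbf{B}(a,r)$ and the Radon-measure bound on $\|\delta W_y\|$, and then integrate in $y$ via Fubini's theorem and the coarea formula of~\ref{remark:generalise_sect_8} and \cite[8.5, 30]{MR3528825}. The only difference is that you spell out some bookkeeping (finiteness of $\|W_y\|(\mathbf{R}^n)$, the support containment) that the paper leaves implicit.
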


\begin{proof}
	Define $E(y) = \{ x \with f(x) > y \}$ for $0 < y < \infty$.  In view
	of \ref{remark:wy}, we apply~\ref{theorem:poincareTypeIneq} with
	$V$ replaced by $W_y$ to obtain
	\begin{equation*}
		\boldsymbol \alpha (m)^{-1/m} r^{-1} \| V \| ( E(y)) \leq
		\boldsymbol \gamma (m) \big ( \| \delta V \| ( E(y)) + \| V \,
		\partial E(y) \| \, \mathbf U (a,r) \big )
	\end{equation*}
	for $\mathscr L^1$ almost all $0 < y < \infty$.  Integrating this
	inequality with respect to $\mathscr L^1$ yields the conclusion by
	means of Fubini's theorem (see \cite[2.6.2]{MR41:1976}) and the coarea
	formula (see \ref{remark:generalise_sect_8} and
	\cite[8.5,\,30]{MR3528825}).
\end{proof}

\begin{remark}
	In view of \ref{example:c1}, \ref{remark:generalise_sect_9}, and
	\cite[9.4]{MR3528825}, there is no similar control of $\| V \|_{(p)}
	(f)$ involving a number depending only on $m$ and $p$, for any $p>1$
	by~\ref{example:bunch_of_planes}.
\end{remark}

\addcontentsline{toc}{section}{\numberline{}References}

\medskip \noindent \textsc{Affiliations}

\medskip \noindent Ulrich Menne \smallskip \newline Institute of Mathematics,
University of Leipzig \newline Augustusplatz 10, 04109 \textsc{Leipzig},
\textsc{Germany} \smallskip \newline Max Planck Institute for Mathematics in
the Sciences \newline Inselstra{\ss}e 22,  04103 \textsc{Leipzig},
\textsc{Germany}

\medskip \noindent Christian Scharrer \smallskip \newline
Mathematics Institute, Zeeman Building, University of Warwick \newline
\textsc{Coventry} CV4 7AL, \textsc{Great Britain}

\medskip \noindent \textsc{Email addresses}

\medskip \noindent
\href{mailto:Ulrich.Menne@math.uni-leipzig.de}{Ulrich.Menne@math.uni-leipzig.de}
\quad
\href{mailto:C.Scharrer@warwick.ac.uk}{C.Scharrer@warwick.ac.uk}

\end{document}